\documentclass[11pt,a4paper]{amsart}%
\usepackage{amsfonts}
\usepackage{amssymb}
\usepackage[utf8]{inputenc}
\usepackage{amsmath}
\usepackage{graphicx}%
\setcounter{MaxMatrixCols}{30}
\providecommand{\U}[1]{\protect\rule{.1in}{.1in}}
\vfuzz2pt
\hfuzz2pt
\usepackage[colorlinks=true, linkcolor=red, citecolor=blue]{hyperref}

\usepackage[]{epsfig}
\usepackage[]{pstricks}
\usepackage{tikz}

\newtheorem{theorem}{Theorem}[section]
\newtheorem{proposition}[theorem]{Proposition}

\newtheorem{remark}{Remark}
\theoremstyle{remark}

\newcommand{\remove}[1]{ }

\def\R{\mathbb R}
\def\be{\begin{equation}}
\def\ee{\end{equation}}
\def\ba{\begin{eqnarray}}
\def\ea{\end{eqnarray}}

\setlength{\oddsidemargin}{ 0.0 in} \setlength{\parindent}{ 24pt}
\setlength{\evensidemargin}{ 0.0 in} \setlength{\parindent}{ 24pt}
\setlength{\textheight} {9.5 in} \setlength{\textwidth}{ 6.5 in}
\setlength{\topmargin}{ -.4 in}

\numberwithin{equation}{section}
\begin{document}
\title[Well-posedness and Controllability: Higher order KdV]{Well-posedness and controllability of Kawahara equation in weighted Sobolev spaces}
\author[Capistrano--Filho]{Roberto de A. Capistrano--Filho*}\thanks{*Corresponding author: roberto.capistranofilho@ufpe.br}
\address{Departamento de Matemática, Universidade Federal de Pernambuco (UFPE), 50740-545, Recife-PE, Brazil.}
\email{roberto.capistranofilho@ufpe.br}
\author[Gomes]{Milena de S. Gomes}
\address{Departamento de Matemática, Universidade Federal de Pernambuco (UFPE), 50740-545, Recife-PE, Brazil.}
\email{mili.monique90@gmail.com}
\subjclass[2010]{Primary: 35Q53, 93B05, Secondary: 37K10}
\keywords{Kawahara equation, null controllability, exact controllability, regional controllability, Lax--Milgram theorem, weighted Sobolev space}

\begin{abstract}
We consider the Kawahara equation, a fifth order Korteweg-de Vries type equation, posed on a bounded interval. The first result of the article is related to the well-posedness in weighted Sobolev spaces, which one was shown using a general version of the Lax--Milgram Theorem. With respect to the control problems, we will prove two results. First, if the control region is a neighborhood of the right endpoint, an exact controllability result in weighted Sobolev spaces is established. Lastly, we show that the Kawahara equation is controllable by regions on $L^2$ Sobolev space, the so-called \textit{regional controllability}, that is, the state function is exact controlled on the left part of the complement of the control region and null controlled on the right part of the complement of the control region.
\end{abstract}
\maketitle

\section{Introduction\label{Sec0}}
\subsection{Presentation of problem }
Fifth order Korteweg-de Vries (KdV) type equation can be written as
\be\label{kaw}
u_t+u_x+\beta u_{xxx}+ \alpha u_{xxxxx}+uu_x=0,
\ee
where $u=u(t,x)$ is a real-valued function of two real variables $t$ and $x$, $\alpha$ and $\beta$ are real constants. When we consider, in \eqref{kaw},  $\beta=1$ and $\alpha=-1$, T. Kawahara  \cite{Kawahara} introduced a dispersive partial differential equation which describes one-dimensional propagation of small-amplitude long waves in various problems of fluid dynamics and plasma physics, the so-called Kawahara equation. 

In this article we shall be concerned with the well-posedness and control properties of Kawahara when the control acting through a forcing term $f$ incorporated in the equation:
\be
u_{t}+u_{x}+u_{xxx}-u_{xxxxx}+ uu_{x}=f, \quad t\in [0,T], \ x\in[0,L],
\label{a1}
\ee
with appropriate boundary conditions. Our main purpose is to see whether there are solutions in some appropriate Sobolev spaces and if one can force solutions of \eqref{a1} to have certain desired properties by choosing an appropriate control input $f$.  We will consider the following controllability issue:

\vspace{0.2cm}

\noindent\textit{Given an initial state $u_{0}$ and a
terminal state $u_{1} $ in a certain space, can one find an
appropriate control input $f$ so that the equation (\ref{a1}) admits a
solution $u$ which equals $u_{0}$ at time $t=0$ and  $u_{1}$ at time
$t=T$?}

\vspace{0.1cm}

If one can always find a control input $f$ to guide the system described by
(\ref{a1}) from any given initial state $u_{0}$ to any given terminal state
$u_{1}$, then the system (\ref{a1}) is said to be {\em exactly
controllable}. If the system can be driven, by means of a control $f$, from
any state to the origin (i.e. $u_{1}\equiv0$), then one says that system \eqref{a1} is
{\em null controllable}.

\subsection{Previous results}
Kawahara equation is a dispersive partial differential equation (PDE) describing numerous wave phenomena such as magneto-acoustic waves in a cold plasma \cite{Kakutani}, the propagation of long waves in a shallow liquid beneath an ice sheet \cite{Iguchi}, gravity waves on the surface of a heavy liquid \cite{Cui}, etc. In the literature this equation is also referred to as the fifth-order KdV equation \cite{Boyd}, or singularly perturbed KdV equation \cite{Pomeau}.

There are valuable efforts in the last years that focus on the analytic
and numerical methods for solving \eqref{kaw}. These methods
include the tanh-function method \cite{Berloff}, extended tanh-function method
\cite{Biswas}, sine-cosine method \cite{Yusufoglu}, Jacobi elliptic functions
method \cite{Hunter}, direct algebraic method \cite{Polat}, decomposition
methods \cite{Kaya}, as well as the variational iterations and homotopy
perturbations methods \cite{Jin}. For more details see \cite{Bridges,
Shuangping, Sirendaoreji, Wazwaz1, DZhang}, among others. These approaches
deal, as a rule, with soliton-like solutions obtained while one considers
problems posed on a whole real line. For numerical simulations, however, there
appears the question of cutting-off the spatial domain \cite{Bona1, Bona2}.
This motivates the detailed qualitative analysis of problems for \eqref{kaw} in bounded regions \cite{Faminskii}.

In addition to the aspects mentioned above, the Kawahara equation has been intensively studied from various other aspects of mathematics, including the  well-posedness, the existence and stability of solitary waves, the integrability, the long-time behavior, the stabilization and control problem, etc. For example, concerning the Cauchy problem in the real line, we can cite, for instance, \cite{Cui,Faminskii,kepove,ponce} and references therein for a good review of the problem. For what concerns the boundary value problem, the Kawahara equation with homogeneous boundary conditions was investigated by Doronin and Larkin \cite{Doronin2} and also in a half-strip in \cite{FaOp} for Faminkii and Opritova. Still in relation with results of well-posedness in weighted Sobolev space, we can mention \cite{khanal} and the reference therein.

We can not forget the advances in control theory for the Kawahara equation. Recently, the first author, in \cite{CaKawahara}, studied the stabilization problem and conjectured a critical set phenomenon for Kawahara equations as occurs with the KdV equation \cite{CaZh,Rosier} and Boussinesq KdV-KdV system \cite{CaPaRo1}, for example. The characterization of critical sets for the Kawahara equation is a completely open and interesting problem, we can cite for a good overview about this topic \cite{Vasconcellos}. 

It is important to note that the (third-order) Korteweg–de Vries equation has drained much attention (see in particular \cite{Bona1,Bona2,Faminskii,goubet}). With respect of the internal and boundary controllability problem the equivalent for the Korteweg–de Vries equation has also known many developments lately, see \cite{CaPaRo,cerpa,cerpa1,Cocr,GGa,GG1,Rosier} and the reference therein.

Let us mention the result proved by Glass and Guerrero, in \cite{GG}, with respect to boundary controllability of fifth order KdV equation. In this work the authors treated the exact controllability when two or five controls are inputting on the boundary conditions.  Still related to the control and stabilization problem we can cite \cite{CaKawahara,Doronin,Guzman,Vasconcellos}. By contrast, the mathematical theory pertaining to the study of the internal controllability in a bounded domain is considerably less advanced for the equation \eqref{kaw}. 

As far as we know, the control problem was, first, studied in \cite{zhang,zhang1} when the authors considered a periodic
domain $\mathbb{T}$ with a distributed control of the form
\[
f(x,t)=(Gh)(x,t):= g(x)(  h(x,t)-\int_{\mathbb{T}}g(y) h(y,t) dy),
\]
where $g\in C^\infty (\mathbb T)$ was such that $\{g>0\} = \omega$ and $\int_{\mathbb T} g(x)dx=1$, and 
the function $h$ was considered as a new control input. 

To finish this historical overview, more recently, Chen \cite{MoChen} considered the Kawahara equation posed on a bounded interval $(0,T)\times(0,L)$, with a distributed control. The author established a Carleman estimate for the Kawahara equation with an internal observation, as done in \cite{CaPaRo} for the KdV equation. Then, applying this Carleman estimate, she showed that the Kawahara equation \eqref{a1} is null controllable when $f$ is supported in a $\omega\subset(0,L)$.

In this article, we will try to close the possibilities for the internal controllability issues for the Kawahara equation. We shall consider the system 
\begin{equation}
\left\{
\begin{array}
[c]{lll}%
u_{t}+u_{x}+uu_{x}+u_{xxx}-u_{xxxxx}= f  &  & \text{in }(  0,T)  \times(  0,L)  \text{,}\\
u(  t,0)  =u(  t,L)  =u_{x}(  t,0)=u_{x}(  t,L)  =u_{xx}(  t,L)=0 &  & \text{in }(  0,T)  \text{,}\\
u(  0,x)  =u_{0}(  x)  &  & \text{in }( 0,L)  \text{.}%
\end{array}
\right.  \label{S1}%
\end{equation}
As the smoothing effect is different from those in a periodic domain, the results in this paper turn out to be very different from those in  \cite{zhang,zhang1}.  First, for a controllability result in $L^2(0,L)$, the control $f$ has to be taken in the space $L^2(0,T,H^{-2}(0,L))$. Actually, with    any control $f\in L^2(0,T,L^2(0,L))$, the solution of \eqref{S1} starting from $u_0=0$ at $t=0$ would remain in $H^2_0(0,L)$ (see \cite{GG}).  On the other hand, as for the boundary control, the localization of the distributed control plays a role in the results. It is important to point out that the results of the article, presented in the next section, remain valid for the fifth order KdV equation \eqref{kaw}.

\subsection{Main results}
The aim of this paper is to address the controllability issue for the Kawahara equation \eqref{S1} on a bounded domain with a distributed control. Our first result is the following one: 
\begin{theorem}
\label{thmB}
Let $T>0$, $\omega=(l_1,l_2)=(L-\nu ,L)$ where $0<\nu <L$.
Then, there exists $\delta>0$ such that for any $u_{0}$, $u_{1}\in L^2_{  \frac{1}{L-x}  dx}$
with
\[
\left\Vert u_{0}\right\Vert _{L^2_{ \frac{1}{L-x}  dx} } \leq\delta \ \text{ and } \  
\left\Vert u_{1}\right\Vert _{L^2_{  \frac{1}{L-x}  dx}}\leq\delta ,
\]
one can find a control input $f\in L^{2}(  0,T;H^{-2}(0,L))  $ with $\text{supp} (f)\subset (0,T) \times \omega$ 
such that, the solution of (\ref{S1}) $$u\in C^0([0,L],L^2 (0,L) )
\cap L^2(0,T,H^2(0,L))$$  satisfies  $$u(T,\cdot)  =u_{1}\,\text{ in } (0,L) \text{ and } u\in C^0([0,T],L^2_{  \frac{1}{L-x}  dx} ).$$ Additionally, 
$f\in L^2_{ (T-t) dt}(0,T,L^2(0,L))$.  
\end{theorem}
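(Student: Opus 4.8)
The plan is to deduce Theorem~\ref{thmB} from the analogous statement for the \emph{linear} Kawahara system and to close the nonlinear problem by a contraction argument. Writing $uu_x=\tfrac12\px(u^2)$ and using that the desired solution lies in $C^0([0,T];L^2(0,L))\cap L^2(0,T;H^2(0,L))$, one gets $u^2\in L^2(0,T;L^2(0,L))$ (product of $L^2_tL^\infty_x$ and $L^\infty_tL^2_x$ bounds) and hence $\px(u^2)\in L^2(0,T;H^{-1}(0,L))\hookrightarrow L^2(0,T;H^{-2}(0,L))$, with a bound quadratic in the norm of $u$. Therefore, granting the linear statement below together with the $L^2_{\frac1{L-x}dx}$ well-posedness already obtained (via the Lax--Milgram argument), a Banach fixed point on a ball of radius $O(\delta)$ in $C^0([0,T];L^2_{\frac1{L-x}dx})\cap L^2(0,T;H^2(0,L))$ yields the nonlinear result; the smallness of $\delta$ is used precisely to make that map a contraction.

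\emph{Linear controllability (Step 1).} For $w_0,w_1\in L^2_{\frac1{L-x}dx}$ and a source $g\in L^2(0,T;H^{-2}(0,L))$ one must produce $f$ with $\mathrm{supp}\,f\subset(0,T)\times\omega$ steering $w_0$ to $w_1$ for $w_t+w_x+w_{xxx}-w_{xxxxx}=g+f$ under the boundary conditions of \eqref{S1}. I would treat this by duality: the control problem is equivalent to an observability inequality for the adjoint backward equation $-\varphi_t-\varphi_x-\varphi_{xxx}+\varphi_{xxxxx}=0$ with the adjoint boundary conditions, of the form $\|\varphi(0,\cdot)\|_{\ast}^2\le C\int_0^T\|\varphi(t,\cdot)\|_{H^2(\omega)}^2\,dt$, the left-hand norm being dual to the data space (two spatial derivatives of $\varphi$ appear because the control is $H^{-2}$-valued). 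Such an inequality follows from the Carleman estimate for the Kawahara operator with interior observation established by Chen~\cite{MoChen}, together with a compactness--uniqueness argument to absorb the lower-order terms, relying on unique continuation for the Kawahara equation (as in \cite{CaKawahara}); alternatively, since $\omega$ is a neighbourhood of $x=L$, one may reduce to the boundary controllability of Glass--Guerrero~\cite{GG} by cutting off the state near the right endpoint.

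\emph{Weighted estimate and the time weight (Step 2).} The construction can be carried out inside $L^2_{\frac1{L-x}dx}$ because the weight is singular exactly at $x=L$, where the Kawahara flow enjoys a Kato-type gain of derivatives. Multiplying the equation by $\frac{w}{L-x+\eps}$, integrating over $(0,L)$, integrating by parts and letting $\eps\to0$, the fifth-order term produces, besides lower-order contributions, a nonnegative quantity dominating the terms that carry the singular factors $\px^{k}\frac1{L-x}$; this gives $\frac{d}{dt}\|w\|_{L^2_{\frac1{L-x}dx}}^2\le C\|w\|_{L^2_{\frac1{L-x}dx}}^2+\big\langle g+f,\tfrac{w}{L-x}\big\rangle$, so that $w_0\in L^2_{\frac1{L-x}dx}$ and $\mathrm{supp}\,f\subset(0,T)\times\omega$ propagate to $w\in C^0([0,T];L^2_{\frac1{L-x}dx})$; running the same computation backward from $t=T$ takes care of the target $w_1$. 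Giving a meaning to $\langle f,\tfrac{w}{L-x}\rangle$ with $f$ concentrated at the singularity is what forces one to show that the control produced by the duality method actually lies in $L^2_{(T-t)dt}(0,T;L^2(0,L))$ — equivalently, that the adjoint trajectory $\varphi$ gains two spatial derivatives and a power of $(T-t)$ locally in $\omega$ — and to balance that gain against the blow-up of $\px^{k}\frac1{L-x}$ as $x\to L$.

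\emph{Main obstacle.} The delicate point is exactly this interaction between the singular spatial weight $\frac1{L-x}$ and the localization of the control at $x=L$: the worst terms in the weighted multiplier estimate sit in $\omega$, precisely where $f$ lives and where a priori $f$ is only $H^{-2}$ in $x$. Resolving it requires the sharp smoothing bound $f\in L^2_{(T-t)dt}(0,T;L^2(0,L))$ for the HUM control, read off from refined estimates on the adjoint state, and then the stability of this weighted class under the nonlinear map $u\mapsto\px(u^2)$, which is once more where the smallness of $\delta$ enters.
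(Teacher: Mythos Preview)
Your overall architecture --- linear exact controllability via HUM/duality, then a contraction for the nonlinear problem --- is exactly the paper's. However, two concrete points do not close as written.

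\textbf{The fixed-point space and the nonlinear estimate.} You propose to run the contraction in $C^0([0,T];L^2_{\frac{1}{L-x}dx})\cap L^2(0,T;H^2(0,L))$ using only $uu_x=\tfrac12\px(u^2)\in L^2(0,T;H^{-2})$. That estimate is correct but too weak to preserve the singular weight: the linear solution map with a generic $H^{-2}$ source (Proposition~\ref{prop20}) lands in $C^0_tL^2_{xdx}$, not in $C^0_tL^2_{\frac{1}{L-x}dx}$, and $L^\infty(0,L)\not\subset L^2_{\frac{1}{L-x}dx}$ since the weight is not integrable. The paper avoids this by working in the stronger space $L^2(0,T;V)$ with $V=\{u\in H^2_0:\ u_{xx}\in L^2_{(L-x)^{-2}dx}\}$ (see \eqref{AAA1}--\eqref{AAA2}), proving the bilinear bound $\|uv_x\|_{L^1(0,T;H)}\le C\|u\|_{L^2(0,T;V)}\|v\|_{L^2(0,T;V)}$ (Proposition~\ref{prop_weight}) and using the semigroup on $H=L^2_{\frac{1}{L-x}dx}$ with $L^1(0,T;H)$ forcing. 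Without this weighted $V$ regularity, your Duhamel term leaves the weighted space and the ball is not invariant.

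\textbf{The weighted observability.} Neither of your proposed routes for the linear step gives the needed inequality $\|v_T\|^2_{L^2_{(L-x)dx}}\le C\int_0^T\|\rho v_{xx}\|_{L^2}^2\,dt$ directly. Chen's Carleman estimate in \cite{MoChen} is tailored to null controllability in $L^2$ with $L^2$ controls, and its weights do not produce the spatial dual weight $(L-x)$; the cut-off reduction to \cite{GG} yields controllability in $L^2(0,L)$, which is strictly larger than $L^2_{\frac{1}{L-x}dx}$ (targets here must vanish at $x=L$). The paper instead uses a tailored multiplier $q(t,x)=(T-t)b(x)$ to obtain \eqref{H45}, then a compactness--uniqueness argument together with Holmgren's theorem to drop the lower-order remainder and reach \eqref{H48}--\eqref{H30}. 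The extra regularity $f\in L^2_{(T-t)dt}(0,T;L^2)$ is obtained not from a Carleman but from a bootstrapping of the adjoint: one first gets $w\in L^2(0,T;H^2_0)$ from $w_0\in L^2_{xdx}$, then $w\in L^2(0,T;H^3)$ from $w_0\in H^2_0$ by interpolation with $L^2(0,T;H^7)$, and combines the two via Fubini to reach \eqref{H60}. Your ``refined estimates on the adjoint state'' point in the right direction, but the mechanism is this two-scale smoothing rather than a Carleman weight.
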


Actually, we shall have to investigate the well-posedness of the linearization of \eqref{S1} in the space $L^2_{ \frac{1}{L-x}dx}$ and  the well-posedness of the (backward) adjoint system in  the ``dual space'' $L^2_{ (L-x) dx}$. The proof of this result relies on a general version of the Lax–Milgram theorem (see, e.g., \cite{Lions}).The observability inequality is obtained by \textit{multiplier method, compactness-uniqueness arguments and a unique continuation property}. Finally, the exact controllability is extended to the nonlinear system by using the \textit{contraction mapping principle}. 

\vspace{0.2cm}

The second result of this work is devoted to prove that is possible to control the state function on $(0,l_1)$, so that a "regional controllability'' can be established:

\begin{theorem}
\label{thmC}
Let $T>0$ and $\omega = (l_1,l_2)$ with $0<l_1<l_2<L$.  Pick any number $l_1'\in (l_1,l_2)$. Then there exists a number $\delta >0$ such that  for any
$u_0,u_1\in L^2(0,L)$ satisfying 
\[
||u_0||_{L^2(0,L)} \le \delta, \qquad ||u_1||_{L^2(0,L)} \le \delta ,
\]
one can find a control $f\in L^2(0,T,H^{-2}(0,L))$ with $\text{supp}(f)\subset (0,T)\times \omega$ such that the solution  of \eqref{S1}
$$u\in C^0([0,T],L^2(0,L)) \cap L^2(0,T,H^2 (0,L))$$ satisfies 
\begin{equation*}
u(T,x) =\left\{ 
\begin{array}{ll}
u_1(x) & \text{ if }  \ x\in (0,l_1');\\
0 &\text{ if } \ x\in (l_2,L).
\end{array}
\right. 
\end{equation*}
\end{theorem}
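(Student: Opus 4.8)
The plan is to reduce the regional controllability in Theorem~\ref{thmC} to the exact controllability result of Theorem~\ref{thmB} by a domain-splitting and cut-off argument. The key observation is that on the subinterval $(0,l_1')$ the point $x=l_1'$ plays the role that $x=L$ played in Theorem~\ref{thmB}: near the \emph{right} endpoint of the interval on which we want exact control we have the control region $(l_1,l_2)\ni l_1'$ available. So I would first introduce the auxiliary interval $(0,l_1')$ and the weight $\tfrac{1}{l_1'-x}\,dx$, and observe that Theorem~\ref{thmB} (and the whole machinery behind it: well-posedness of the linearization in $L^2_{\frac{1}{l_1'-x}dx}$, the adjoint problem in $L^2_{(l_1'-x)dx}$, the observability inequality via multipliers, compactness--uniqueness and unique continuation, and the contraction argument for the nonlinear problem) applies verbatim on $(0,l_1')$ with control region $(l_1,l_1')\subset\omega$. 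This yields a control steering $u_0|_{(0,l_1')}$ to $u_1|_{(0,l_1')}$ in time $T$, \emph{provided} $u_0|_{(0,l_1')}$ and $u_1|_{(0,l_1')}$ lie in $L^2_{\frac{1}{l_1'-x}dx}$ with small norm — which is where the first technical point arises, since a generic $u_0\in L^2(0,L)$ need not be in this weighted space near $x=l_1'$.

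To handle that, I would split the target and the data by a smooth cut-off. Pick $\chi\in C^\infty(\mathbb R)$ with $\chi\equiv 1$ on $(-\infty,l_1]$ and $\chi\equiv 0$ on $[l_1',\infty)$, so $1-\chi$ is supported in $[l_1,\infty)$ and, crucially, $\chi u_1$ vanishes identically near $x=l_1'$, hence $\chi u_1\in L^2_{\frac{1}{l_1'-x}dx}$ automatically with norm controlled by $\|u_1\|_{L^2(0,L)}$. The strategy then is a two-step (or superposition) construction. For the piece living to the left, use Theorem~\ref{thmB} on $(0,l_1')$ to drive $\chi u_0$ (also supported away from $l_1'$, after possibly replacing $u_0$ similarly — or more carefully, drive the actual restriction $u_0|_{(0,l_1')}$, which requires first flowing for a short time so that the solution gains the smoothing/decay needed to enter the weighted space; here I would invoke the smoothing effect of the Kawahara semigroup, exactly as in the boundary-controllability literature for KdV) to the target $\chi u_1$ on $(0,l_1')$, extending the resulting control by zero to $(0,L)$ and noting its support sits in $(l_1,l_1')\times(0,T)\subset\omega\times(0,T)$. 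For the piece on the right, $(l_2,L)$, the requirement is only null-controllability of $u(T,\cdot)$ there, which is weaker; I would obtain it either from Chen's null-controllability result \cite{MoChen} applied on $(0,L)$ restricted to the right part, or — to keep the whole construction inside a single framework — by a second application of a null-control version of the argument with control region $(l_1,l_2)$ and observing nothing is demanded on $(l_1',l_2)$.

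The main obstacle, and the step requiring the most care, is gluing these two control actions together consistently: the control from the ``left'' problem is supported in $(l_1,l_1')$ and the control from the ``right'' problem in some subinterval of $\omega$, but the solution generated by the combined control must simultaneously realize $u_1$ on $(0,l_1')$ \emph{and} $0$ on $(l_2,L)$, and the nonlinearity $uu_x$ couples the two regions. The clean way is to run the argument on the full system \eqref{S1} directly: set up the linearized control problem on $(0,L)$ with the observability inequality for the adjoint state restricted to test functions that see only $(0,l_1')\cup(l_2,L)$ of the terminal data (i.e. the terminal condition of the adjoint is free on $(l_1',l_2)$), prove this relaxed observability by combining the multiplier identity with the weight $\tfrac{1}{l_1'-x}$ on the left and a standard energy estimate on the right, together with a unique continuation statement adapted to the region $(l_1',l_2)$; then close the nonlinear problem by the contraction mapping principle in $C^0([0,T];L^2(0,L))\cap L^2(0,T;H^2(0,L))$ for $\delta$ small. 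The delicate analytic point there is that the relaxed observability inequality only controls a partial norm of the adjoint terminal state, so the compactness--uniqueness step must be run with the understanding that the ``missing'' component on $(l_1',l_2)$ is genuinely uncontrolled — this is consistent precisely because we only ask for regional, not global, control, but verifying the Fredholm alternative and the unique continuation property in this degenerate setting is where the real work lies.
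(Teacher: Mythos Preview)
Your plan differs substantially from the paper's, and the two obstacles you yourself flag --- getting the initial datum $u_0$ into the weighted space $L^2_{\frac{1}{l_1'-x}dx}$, and gluing two independently constructed controlled trajectories across the nonlinearity --- are not actually resolved in what you wrote. The smoothing effect gives $u\in L^2(0,T;H^2_0)$, not pointwise membership in $H^2_0$ at a specific time with a controllably small norm, so ``flow for a short time'' does not obviously place the state in the weighted space; and the ``relaxed observability'' programme of your last paragraph is a genuinely new argument, not a reduction to known results.

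The paper sidesteps both issues by \emph{time-splitting} rather than space-splitting, and it never invokes Theorem~\ref{thmB} at all. On $[0,T/2]$ one applies Chen's internal null controllability \cite[Theorem~1.1]{MoChen} on the full interval $(0,L)$ with control in $\omega$ to drive $u_0\in L^2(0,L)$ to zero; this needs no weighted space. On $[T/2,T]$, starting from the zero state, one invokes the Glass--Guerrero \emph{boundary} controllability theorem \cite[Theorem~1]{GG} on the subinterval $(0,l_2')$, for some $l_2'\in(l_1',l_2)$, with boundary controls $y(t,l_2')=g(t)$, $y_x(t,l_2')=h(t)$, to reach $u_1|_{(0,l_2')}$. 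One then sets $u:=\mu\, y$ (extended by $0$ on $(l_2',L)$) for a cut-off $\mu\in C^\infty([0,L])$ with $\mu\equiv 1$ on $(0,l_1')$ and $\mu\equiv 0$ near $l_2'$. The forcing $f:=u_t+u_x+uu_x+u_{xxx}-u_{xxxxx}$ is computed explicitly; every term contains a derivative of $\mu$ or the factor $\mu(\mu-1)$, hence $\text{supp}(f)\subset(0,T)\times(l_1',l_2')\subset(0,T)\times\omega$, and one checks $f\in L^2(0,T;H^{-2}(0,L))$. Because the second stage starts from zero there is no weighted-space hypothesis to verify, and because the cut-off is applied to a \emph{single} solution $y$ rather than to a superposition of two pieces, there is no gluing problem: the commutator terms simply become the internal control.
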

The proof of Theorem \ref{thmC} combines \cite[Theorem 1.1]{MoChen}, a boundary controllability result from \cite{GG} and the use of a cut-off function. Note that, as for the boundary control, the internal control gives a control of hyperbolic type in the left direction and a control of parabolic type in the right direction.

Thus, our work is outlined in the following way: Section \ref{Sec1} is devoted to prove that fifth order KdV equation is well-posed in the weighted spaces $L^2_{xdx}$ and $L^2_{(L-x)^{-1}dx}$. In the Section \ref{Sec2}, our goal is to prove Theorem \ref{thmB}. Section \ref{Sec3} we will give the proof of Theorem \ref{thmC}. Finally, in the last section, Section \ref{Sec4}, we will present some additional comments and some open issues.

\section{A fifth order KdV equation in weighted Sobolev spaces \label{Sec1}}

\subsection{The linear system}
For any measurable function $w:(0,L)\to (0,+\infty )$ (not necessarily in $L^1(0,L)$), we introduce the weighted $L^2-$space
\[
L^2_{w(x)dx} =\{ u\in L^1_{loc}(0,L);\ \int_0^L u(x)^2 w(x)dx <\infty \}. 
\] 
It is a Hilbert space when endowed with the scalar product
\[
(u,v)_{L^2_{w(x)dx}}=\int_0^L u(x)v(x) w(x)dx. 
\]
We first prove the well-posedness of the following linear system
\begin{equation}
\left\{
\begin{array}
[c]{lll}%
u_{t}+u_{x}+\beta u_{xxx}+\alpha u_{xxxxx}=0 &  & \text{in }(  0,T)  \times(
0,L)  \text{,}\\
u(  t,0)  =u(  t,L)  =u_{x}(  t,0)=u_{x}(  t,L)=u_{xx}(  t,L)  =0 &  &
\text{in }(  0,T)  \text{,}\\
u(  0,x)  =u_{0}(  x)  &  & \text{in }(
0,L)  \text{,}%
\end{array}
\right.  \label{ec_lin}%
\end{equation}
in both spaces $L^2_{xdx}$ and $L^2_{\frac{1}{L-x}dx}$, where $\alpha$ and $\beta$ are real constants. We need  the following general version of the Lax–Milgram
Theorem (see, e.g., \cite{Lions}).
\begin{theorem}
\label{khanal} 
Let $W\subset V\subset H$ be three Hilbert spaces with continuous and dense embeddings. 
Let $a(v,w)$ be a bilinear form defined on $V\times W$ that satisfies the following properties:\\
(i) ({\bf Continuity})
\begin{equation*}
\label{gou1}
a(v,w) \le M ||v||_V ||w||_W, \quad \forall v\in V,\ \forall w\in W;
\end{equation*}
(ii) ({\bf Coercivity})
\begin{equation*}
\label{gou2}
a(w,w) \ge m ||w||_V^2, \quad \forall w\in W;
\end{equation*}
Then for all $f\in V'$ (the dual space of $V$), there exists $v\in V$ such that 
\begin{equation*}
\label{gou3}
a(v,w)=f(w), \qquad \forall w\in W.
\end{equation*}
If, in addition to (i) and (ii), $a(v,w)$ satisfies\\
(iii) ({\bf Regularity})
for all $g\in H$, any solution $v\in V$ of \eqref{gou3} with $f(w):=(g,w)_H$ belongs to $W$, then  \eqref{gou3} has a unique solution $v\in W$.
\end{theorem}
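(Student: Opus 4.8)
The plan is to deduce the theorem from the Riesz representation theorem applied to an auxiliary bounded operator $A\colon W\to V$ associated with the form $a$. First I would fix $w\in W$ and note that, by the continuity hypothesis (i), the map $v\mapsto a(v,w)$ is a bounded linear functional on $V$; the Riesz representation theorem then yields a unique $Aw\in V$ with
\[
a(v,w)=(v,Aw)_V\qquad\text{for all }v\in V .
\]
Bilinearity of $a$ makes $A\colon W\to V$ linear, and solving $a(v,w)=f(w)$ for all $w\in W$ becomes the problem: given $f\in V'$, find $v\in V$ with $(v,Aw)_V=f(w)$ for every $w\in W$.

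The core of the argument is to read coercivity (ii) on the range $A(W)\subset V$. Putting $v=w$ in the identity above and using (ii) gives $(w,Aw)_V=a(w,w)\ge m\|w\|_V^2$, whence, by Cauchy--Schwarz,
\[
\|Aw\|_V\ge m\|w\|_V\qquad\text{for all }w\in W .
\]
In particular $A$ is injective, so $\ell(Aw):=f(w)$ is an unambiguously defined linear functional on the subspace $A(W)$ of $V$. Since $f\in V'$ and $W\hookrightarrow V$ continuously, $|f(w)|\le\|f\|_{V'}\|w\|_V\le m^{-1}\|f\|_{V'}\|Aw\|_V$, so $\ell$ is bounded on $A(W)$ in the $V$-norm with $\|\ell\|\le m^{-1}\|f\|_{V'}$. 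I would then extend $\ell$ by continuity to the closed subspace $\overline{A(W)}$ and apply Riesz in the Hilbert space $\overline{A(W)}$ to obtain $v\in\overline{A(W)}\subset V$ with $\ell(z)=(z,v)_V$ for all $z\in\overline{A(W)}$. Taking $z=Aw$ gives $f(w)=\ell(Aw)=(Aw,v)_V=a(v,w)$ for every $w\in W$, which is the desired existence statement \eqref{gou3}.

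For the regularity part, take $f(w)=(g,w)_H$ with $g\in H$; this belongs to $V'$ because the continuous embedding $V\hookrightarrow H$ gives $|(g,w)_H|\le\|g\|_H\|w\|_H\le C\|g\|_H\|w\|_V$. The existence part supplies a solution $v\in V$, and hypothesis (iii) then forces $v\in W$. Uniqueness follows from coercivity alone: if $v_1,v_2\in V$ both solve \eqref{gou3}, then by (iii) both lie in $W$, hence $v_1-v_2\in W$ is an admissible test function and $m\|v_1-v_2\|_V^2\le a(v_1-v_2,v_1-v_2)=0$, so $v_1=v_2$.

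The step needing the most care --- and the reason the classical Lax--Milgram theorem has to be upgraded --- is the mismatch between the trial space $V$ and the test space $W$: continuity of $a$ is only controlled in the stronger $W$-norm, while coercivity is measured in the weaker $V$-norm, so neither Riesz nor Lax--Milgram applies directly on a single space. The remedy is precisely the transplantation to $A(W)\subset V$, where the bound $\|Aw\|_V\ge m\|w\|_V$ turns coercivity into an honest estimate in the ambient space; the continuous and dense embeddings $W\subset V\subset H$ ensure that restricting $f$ to $W$ loses nothing and that functionals of the form $g\mapsto(g,\cdot)_H$ indeed sit in $V'$. The only genuinely delicate point is the well-definedness of $\ell$ on $A(W)$, which is exactly where the injectivity of $A$ furnished by coercivity enters.
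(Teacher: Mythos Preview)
Your argument is correct and follows the classical route to this generalized Lax--Milgram theorem (often attributed to Lions): represent the form through an operator $A\colon W\to V$ via Riesz, convert coercivity into the lower bound $\|Aw\|_V\ge m\|w\|_V$, then transplant the functional to $A(W)$ and apply Riesz again on the closure. The handling of the regularity and uniqueness part is also sound.

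There is nothing to compare against here: the paper does not supply a proof of this statement. It is quoted as a known tool with a reference to Lions (``see, e.g., \cite{Lions}'') and then applied in Propositions~\ref{prop1} and~\ref{prop2}. So your write-up is not an alternative to the paper's proof but rather fills in what the paper deliberately outsourced. If you want to align with the literature, note that this is precisely Lions' projection lemma; your construction of $\ell$ on $A(W)$ and its extension to $\overline{A(W)}$ is the standard mechanism, and the only place the density of $W$ in $V$ (and of $V$ in $H$) really matters is in the interpretation of $f$ and in the semigroup consequences mentioned in the remark following the theorem, not in the existence step itself.
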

\begin{remark}In the sense of semigroup theory, Theorem \ref{khanal} gives us the following:  Let $D(A)$ denote the set of those $v\in W$ when  $g$ ranges over $H$, and set $Av= - g$. Then $A$ is a maximal dissipative operator, and hence it generates  a continuous semigroup of contractions $(e^{tA})_{t\ge 0}$ in $H$.  
\end{remark}
\subsection{Well-posedness on $L^2_{xdx}$} This subsection is dedicated to give an answer for the well-posedness  of \eqref{ec_lin} on $L^2_{xdx}$. More precisely, for sake of simplicity, let us consider the operator $A_1u=-u_{xxxxx}-u_{xxx}$, thus, the following result can be proved.
\begin{proposition}\label{prop1}
Let $A_1u=-u_{xxxxx}-u_{xxx}$ with domain 
\[ D(A_1)=\{ u\in H^4(0,L)\cap H^2_0(0,L); \ u_{xxxxx}\in L^2_{xdx}, \ u_{xx}(L)=0\} \subset L^2_{xdx} .\]
 Then $A_1$ generates a
strongly continuous semigroup in $L^2_{xdx}$.   
\end{proposition}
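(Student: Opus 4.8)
The plan is to verify the hypotheses of the general Lax--Milgram theorem (Theorem \ref{khanal}) with the triple of Hilbert spaces $H = L^2_{xdx}$, $V = $ a suitable weighted $H^2$-type space incorporating the boundary conditions $u(0)=u(L)=u_x(0)=u_x(L)=0$, and $W = D(A_1)$, and then invoke the accompanying remark to conclude that $A_1$ generates a strongly continuous semigroup of contractions on $L^2_{xdx}$. Concretely, for $\lambda>0$ large enough one studies the resolvent equation $\lambda u - A_1 u = g$, i.e. $\lambda u + u_{xxx} + u_{xxxxx} = g$, and rewrites it in weak form: multiply by $v\,x\,dx$ (the weight) for a test function $v$ vanishing appropriately at the endpoints, and integrate by parts to define the bilinear form
\begin{equation*}
a(u,v) = \lambda \int_0^L u v\, x\, dx + \int_0^L (u_{xxx}+u_{xxxxx}) v\, x\, dx .
\end{equation*}
Here the weight $x$ is the crucial device: because $x$ vanishes at the left endpoint, the boundary terms produced by integration by parts are tamed, and because the paper is treating the problem with only \emph{one} condition at $x=L$ on $u_{xx}$ (the operator is fifth order, so five boundary conditions are split $4+1$ between the two endpoints, with the weight effectively supplying the missing conditions at $0$), the form is well-defined on $V\times W$.

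First I would fix the functional setting: define $V$ as the closure of smooth functions satisfying the four homogeneous conditions ($u(0)=u(L)=u_x(0)=u_x(L)=0$) in the norm $\|u\|_V^2 = \int_0^L (u_{xx}^2 x + u^2 x)\,dx$ or a comparable weighted norm, check the continuous dense embeddings $W\subset V\subset H$, and then carefully carry out the integrations by parts on $\int_0^L u_{xxxxx} v\, x\, dx$ and $\int_0^L u_{xxx} v\, x\, dx$. The key algebraic fact to extract is the \emph{coercivity identity}: taking $v=u$, the boundary terms at $x=L$ collapse using $u(L)=u_x(L)=u_{xx}(L)=0$, the boundary terms at $x=0$ vanish thanks to the factor $x$ together with $u(0)=u_x(0)=0$, and the interior terms, after moving derivatives around, leave something like $a(u,u) = \lambda\int u^2 x\,dx + \tfrac{1}{2}\int u_{xx}^2\,dx + (\text{lower order}) \geq m\|u\|_V^2$ for $\lambda$ large. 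Continuity $a(u,v)\le M\|u\|_V\|v\|_W$ is a routine Cauchy--Schwarz estimate once the form is written with at most two derivatives on $u$ and the remaining derivatives (up to five) on $v$, which is legitimate since $v$ ranges over $W=D(A_1)$.

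For the regularity hypothesis (iii) I would take $g\in H = L^2_{xdx}$ and a solution $v\in V$ of $a(v,w)=(g,w)_H$ for all $w\in W$, and show $v\in D(A_1)$: testing against $w\in C_c^\infty(0,L)$ identifies $v_{xxxxx}+v_{xxx} = g - \lambda v$ in the sense of distributions, giving interior regularity and $v_{xxxxx}\in L^2_{xdx}$; then a bootstrap near the endpoints together with the variational characterization recovers the boundary conditions $v\in H^2_0$, $v\in H^4$, and $v_{xx}(L)=0$. The main obstacle I anticipate is precisely this interplay at the boundary: one must be scrupulous that the weight $x$ (singular-looking but only degenerate, not singular) allows exactly the right boundary terms to survive, that no spurious condition at $x=0$ is silently imposed, and that the one-sided condition $v_{xx}(L)=0$ genuinely emerges from the weak formulation rather than having to be assumed. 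Establishing the coercivity with a clean constant — i.e. making sure the indefinite cross terms coming from the mixed third/fifth order structure can be absorbed — is the delicate computational heart; once $m>0$ is secured, Theorem \ref{khanal} delivers a unique $v\in W$ solving $\lambda v - A_1 v = g$, so $(\lambda - A_1)$ is onto with dissipative inverse, and by Lumer--Phillips (the Remark following Theorem \ref{khanal}) $A_1$ generates a $C_0$-semigroup of contractions on $L^2_{xdx}$.
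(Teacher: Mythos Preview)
Your plan is essentially the paper's: apply Theorem \ref{khanal} with $H=L^2_{xdx}$, obtain coercivity of $a_\lambda$ for $\lambda$ large, and run a regularity argument to land in $D(A_1)$. The paper's concrete choices differ slightly from yours: it takes $V=H^2_0(0,L)$ with the \emph{unweighted} norm $\|v_{xx}\|_{L^2}$ (not a weighted $H^2$ norm), and $W=\{w\in H^2_0:\ xw_{xxx}\in L^2\}$ (an intermediate third-order space, not $D(A_1)$ itself), writing the form directly as $a(v,w)=\int_0^L v_{xx}\big[(xw)_{xxx}+(xw)_x\big]\,dx$; with this $W$ the paper needs the nontrivial embedding $\|w_{xx}\|_{L^2}\le C\|xw_{xxx}\|_{L^2}$, proved by a contradiction/compactness argument, whereas your choice $W=D(A_1)$ sidesteps that step at the cost of a heavier regularity verification. (Minor correction: the five boundary conditions split $2$ at $x=0$ and $3$ at $x=L$, not $4{+}1$.)
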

\begin{proof}
Let be
\[
H=L^2_{xdx}, \quad V=H^2_0(0,L),\quad W=\{ w\in H^2_0 (0,L),\ w_{xxx}\in L^2_{x^2dx} \}, 
\]
endowed with the respective norms
\[
||u||_H : = ||\sqrt{x} u||_{L^2(0,L)},\quad ||v||_V := ||v_{xx}||_{L^2(0,L)}, \quad ||w||_W := ||xw_{xxx}||_{L^2(0,L)}. 
\]
Clearly, $V\subset H$ with a continuous (dense) embedding between two Hilbert spaces. On the other hand, we have that 
\be
\label{Q1}
||w_{xx}||_{L^2} \le  C ||xw_{xxx}||_{L^2} \qquad \forall w\in W. 
\ee  
In fact, first, we note that we have for $w\in {\mathcal T} :=C^\infty( [0,L] )\cap H^2_0(0,L)$ and $p\in \R$, the following
\begin{equation*}
\begin{split}
0\le \int_0^L (xw_{xxx}+pw_{xx}) ^2dx& = \int_0^L(x^2w^2_{xxx} + 2pxw_{xx}w_{xxx} + p^2 w_{xx}^2) dx \\&= \int_0^L x^2w_{xxx}^2 dx + (p^2-p) \int_0^L w_{xx}^2 dx + pLw_{xx}^2(L).
\end{split}
\end{equation*}
Taking $p=1/2$ results in 
\begin{equation}
\label{poi}
\int_0^L w_{xx}^2 dx \le 4\int_0^L x^2 w_{xxx}^2 dx + 2L |w_{xx}(L)|^2.
\end{equation} 
The estimate \eqref{poi} is also true for any $w\in W$, since $\mathcal T$ is dense in $W$.  Let us prove 
\eqref{Q1} by contradiction. If \eqref{Q1} is false, then there exists a sequence $\{ w^n \}_{n\ge 0}$ in $W$ such that 
\[
1 = ||w_{xx}^n||_{L^2} \ge n ||x w^n_{xxx}||_{L^2}\qquad \forall n\ge 0. 
\]
Extracting subsequences, we may assume  that
\begin{eqnarray*}
w^n&\to& w\quad \text{ in } H^2_0(0,L) \text{ weakly}\\
xw^n_{xxx} &\to& 0\quad \text{ in } L^2(0,L) \text{ strongly}
\end{eqnarray*}
and hence $xw_{xxx}=0$, which gives $w(x)=c_1x^2+c_2x+c_3$. Since $w\in H^2_0(0,L)$, we infer that $w\equiv 0 $. 
Since $w^n$ is bounded in $H^3(L/2,L)$, extracting subsequences we may also assume that $w_{xx}^n (L)$ converges in $\mathbb R$. 
We infer then from \eqref{poi} that $w^n$ is a Cauchy sequence  in $H^2_0(0,L)$, so that 
\[
w^n \to w\quad \text{ in } H^2_0(0,L) \text{ strongly},
\] 
and hence $$||w_{xx}||_{L^2} =\lim_{n\to\infty} ||w_{xx}^n||_{L^2} =1.$$ This contradicts the fact that $w\equiv 0$. The proof of \eqref{Q1} is achieved.  Thus $||\cdot ||_W$ is a norm in $W$, which is clearly a Hilbert space, and $W\subset V$ with continuous (dense) embedding. 

Define the following bilinear form on $V\times W$
\[
a(v,w) := \int_0^L v_{xx} [(xw)_{xxx} +(xw) _x] dx, \qquad v\in V, \ w\in W.
\] 
Let us check that (i), (ii), and (iii) in Theorem \ref{khanal} hold. For $v\in V$ and $w\in W$, follows that
\begin{eqnarray*}
|a(v,w)| &\le& 
||v_{xx}||_{L^2} ||xw_{xxx} + 3w_{xx} + (xw)_x ||_{L^2} \\
&\le& ||v_{xx}||_{L^2} \big( ||xw_{xxx}||_{L^2} + C||w_{xx}||_{L^2}\big) \\
&\le& C ||v||_V ||w||_W
\end{eqnarray*}
where we used Poincar\'e inequality and \eqref{Q1}. This proves that the bilinear form $a$ is well defined and continuous on $V\times W$ and, therefore (i) is proved. 

For (ii), we first pick any $w\in {\mathcal T}$ to obtain
\begin{eqnarray*}
a(w,w) &=& \int_0^L w_{xx} ( 3w_{xx} + xw_{xxx}) dx+\int_0^L w_{xx}(xw)_xdx \\
&=& \frac{5}{2} \int_0^L w_{xx}^2 dx + \left[x\frac{w_{xx}^2}{2}\right]_0^L -\frac{3}{2} \int_0^L w_x^2 dx\\
&\ge & \frac{5}{2} \int_0^L w_{xx}^2 dx - \frac{3}{2} \int_0^L w_x^2 dx.
\end{eqnarray*}
By Poincar\'e inequality
\[
\int_0^L w_x^2(x) dx \le \left( \frac{L}{\pi} \right)^2 \int_0^L w_{xx}^2(x)dx,
\]
and hence
\[
a(w,w) \ge \left(\frac{5}{2} -\frac{3L^2}{2\pi ^2}\right)\int_0^L w_{xx}^2 dx.  
\]
This shows the coercivity when $L< \pi \sqrt{\frac{5}{3}}$. When $L\ge \pi \sqrt{\frac{5}{3}}$, we have to consider, instead of $a$, the bilinear form 
$a_\lambda(v,w):=a(v,w) + \lambda (v,w)_H$ for $\lambda \gg 1$. Indeed, we have by Cauchy-Schwarz inequality and Hardy type inequality
\begin{eqnarray*}
||w||^2_{L^2} &\le & ||x^{\frac{1}{2}} w||_{L^2} ||x^{-\frac{1}{2}} w||_{L^2} \\
&\le& \sqrt{L} ||w||_{H} ||x^{-1} w||_{L^2} \\
&\le& \varepsilon ||w_{xx}||^2_{L^2} +C_\varepsilon ||w||^2_{H}
\end{eqnarray*}  
and hence, by using twice Poincaré inequality
\[
a_\lambda (w,w) \ge \left(\frac{5}{2} -\frac{\varepsilon}{2}\right) ||w||^2_V + \left(\lambda -\frac{C_\varepsilon }{2}\right) ||w||^2_{H}.
\]
Therefore, if $\varepsilon <5$ and $\lambda>C_\varepsilon/2$, then $a_\lambda$ is a continuous bilinear form which is coercive. 

To prove the regularity issue, for given $g\in H$, let us consider $v\in V$ be such that 
\[
a_\lambda(v,w) =(g,w)_H \qquad \forall w\in W,  
\]
more precisely, 
\be
\label{Q3}
\int_0^L v_{xx} ((xw)_{xxx} + (xw)_x)dx + \lambda \int_0^L v(x)w(x) x dx = \int_0^L g(x) w(x) xdx.
\ee
Picking any $w\in {\mathcal D}(0,L)$ we have
\begin{equation*}
\langle x(-v_{xxxxx}- v_{xxx} + \lambda v), w\rangle _{{\mathcal D}',{\mathcal D}}
=\langle xg, w\rangle _{{\mathcal D}',{\mathcal D}}
\qquad \forall w\in {\mathcal D} (0,L),  
\end{equation*}
and hence
\be
\label{Q2} 
-v_{xxxxx} -v_{xxx} +\lambda v = g \qquad \text{ in } {\mathcal D} '(0,L). 
\ee
Since $v\in H^2_0(0,L)$ and $g\in L^2_{xdx}$, we have that $v\in H^5(\varepsilon , L)$ for all $\varepsilon \in (0,L)$ and 
$v_{xxxxx}\in L^2_{xdx}$. Taking any $w \in {\mathcal T}$ and $\varepsilon \in (0,L)$, and scaling in \eqref{Q2} by $xw$ yields
\begin{equation*}
\begin{split}
\int_\varepsilon^L v_{xx}((xw)_{xxx} +(xw)_x)dx + \left[-v_{xxxx} (xw) -v_{xx} (xw)\right]_\varepsilon ^L\\+\left[v_{xxx} (xw)_x -v_{xx} (xw)_{xx}\right]_\varepsilon ^L = \int_\varepsilon^L (g-\lambda v)xw dx. 
\end{split}
\end{equation*}
Letting $\varepsilon \to 0$ and comparing with \eqref{Q3}, we obtain
\be\label{Q4}
\begin{split}
&Lv_{xx}(L)w_{xx}(L)=\\
&\lim_{\varepsilon \to 0} \big( \varepsilon v_{xxxx}(\varepsilon ) w(\varepsilon )+\varepsilon v_{x
x}(\varepsilon ) w(\varepsilon ) -v_{xxx}(\varepsilon )(\varepsilon w_x(\varepsilon)+w(\varepsilon))+v_{xx} (\varepsilon ) (2w_x(\varepsilon ) + \varepsilon w_{xx}(\varepsilon )) \big) .
\end{split}
\ee
Since $v_{xxxxx}\in L^2_{xdx}$, we obtain successively for some constant $C>0$ and all $\varepsilon \in (0,L)$  that
\begin{equation}\label{Q10}
\begin{split}
|v_{xxxx}(\varepsilon ) -v_{xxxx} (L) | \le \left(\int_\varepsilon ^L x  | v_{xxxxx} |^2 dx \right)^\frac{1}{2} \left(\int_\varepsilon ^L x^{-1}dx\right)^\frac{1}{2} \le C |\log \varepsilon | ,
\end{split}
\end{equation}
\begin{equation}
\begin{split} 
|v_{xx}(\varepsilon) | \le C\label{Q11}
\end{split}
\end{equation}
and
\begin{equation}
\begin{split} 
|v_{xxx}(\varepsilon) | \le C\label{Q12}.
\end{split}
\end{equation}
We infer from \eqref{Q10} that $v\in H^4(0,L)$, and hence $v\in W$. Furthermore, letting $\varepsilon \to 0$ in \eqref{Q4} and using \eqref{Q10}, \eqref{Q11} and \eqref{Q12} yields
$v_{xx}(L)=0$, since $w_{xx}(L)$ was arbitrary. We conclude that $v\in {\mathcal D} (A_1)$. Conversely, it is clear that the operator 
$A_1-\lambda $ maps ${\mathcal D} (A_1)$ into $H$, and actually onto $H$ from the above computations. 
Hence $A_1-\lambda $ generates a strongly semigroup of contractions in $H$.  
\end{proof}
\begin{remark}
Note that we can use the same approach to get the Proposition \ref{prop1} for the Kawahara operator, that is, $Au=u_{xxxxx}-u_{xxx}-u_x$. In fact, to do it just consider the following bilinear form in $V\times W$ define by 
\[
a(v,w) := \int_0^L v_{xx}(-(wx)_x+(xw)_{xxx})dx-\int_0^L v_x(xw) dx, \qquad v\in V, \ w\in W.
\] 
\end{remark}
\subsection{Well-posedness on $L^2_{(L-x)^{-1}dx}$} In this subsection we are interested to investigate the well-posedness of \eqref{ec_lin}  on $L^2_{(L-x)^{-1}dx}$. More precisely, for sake of simplicity, let us consider the operator $A_2u=-u_{xxxxx}+\beta u_{xxx}$, with $\beta\in\mathbb{R}$. Thus, the following optimal result can be proved.

\begin{proposition} \label{prop2}
Let $A_2u=-u_{xxxxx}+\beta u_{xxx}$ with domain 
\[ 
{\mathcal D}(A_2)=\{ u\in H^5(0,L)\cap H^2_0(0,L); \ u_{xxxxx}\in L^2_{ \frac{1}{L-x} dx} \text{ and }  \ u_{xx}(L)=0 \} \subset L^2_{ \frac{1}{L-x} dx} .
\]
 Then $A_2$ generates a
strongly continuous semigroup in $L^2_{\frac{1}{L-x} dx}$, for $\beta >-3/80$.   
\end{proposition}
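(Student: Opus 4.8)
The plan is to run, for the operator $A_2$ and the singular weight $(L-x)^{-1}$, the same three--space argument used for Proposition~\ref{prop1}, the difference being that $x=L$ now plays the delicate role that $x=0$ played there. I would set $H=L^2_{\frac{1}{L-x}dx}$, take for $V$ the space $H^2_0(0,L)$ (with a $\dot H^2$--type, possibly $(L-x)$--weighted, norm), and for $W$ a space of functions in $H^2_0(0,L)$ sufficiently flat at $x=L$ on which a weighted $\dot H^3$ norm is finite --- the norm being chosen precisely so that the bilinear form below is continuous; $W$ contains $\mathcal{D}(A_2)$. The preliminary facts are that $V\hookrightarrow H$ densely, which is the Hardy inequality $\int_0^L\frac{u^2}{L-x}\,dx\le C\int_0^L u_{xx}^2\,dx$ for $u\in H^2_0(0,L)$ (itself a consequence of $\int_0^L\frac{u^2}{(L-x)^2}\,dx\le 4\int_0^L u_x^2\,dx$ and Poincar\'e), and that $W\hookrightarrow V$ densely, which is the exact analogue of \eqref{Q1}: I would prove it by the normalized--sequence/weak--compactness argument used there, whereby a hypothetical counterexample produces a bounded minimizing sequence converging weakly in $H^2_0$, its weighted third derivative tending to $0$ strongly, so the weak limit has vanishing third derivative and hence (being in $H^2_0$) is $\equiv0$, while an identity of the type of \eqref{poi} obtained by completing a square converts weak convergence into strong convergence, a contradiction.

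Next I would pair $A_2$ against the test function $(L-x)^{-1}w$ and integrate by parts so that exactly two derivatives fall on $v$, obtaining the bilinear form
\[
a(v,w)=\int_0^L v_{xx}\bigl(\phi_{xxx}-\beta\,\phi_x\bigr)\,dx,\qquad \phi:=\frac{w}{L-x},
\]
together with, when $v$ is regular, boundary terms at the two endpoints which reduce --- using $w\in H^2_0$ and the flatness of $w\in W$ at $x=L$ --- to a single natural term at $x=L$; this is exactly what dictates the definition of $W$. Continuity of $a$ on $V\times W$ is then immediate from Cauchy--Schwarz and the definition of $\|\cdot\|_W$, as in the verification of (i) in Proposition~\ref{prop1}. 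Coercivity is the heart of the matter. For $w\in\mathcal{T}=C^\infty([0,L])\cap H^2_0(0,L)$ one computes $a(w,w)$, or rather $a_\lambda(w,w):=a(w,w)+\lambda\|w\|_H^2$ with $\lambda\gg1$ as in Proposition~\ref{prop1}, integrates by parts to express it as a linear combination of the nonnegative weighted squares $\int_0^L(L-x)^{-2k}\bigl(\partial_x^k w\bigr)^2\,dx$, and removes the cross terms by the sharp one--dimensional Hardy inequalities $\int_0^L\frac{\psi^2}{(L-x)^{2\gamma}}\,dx\le\bigl(\gamma-\tfrac12\bigr)^{-2}\int_0^L\frac{\psi_x^2}{(L-x)^{2\gamma-2}}\,dx$ (with the appropriate vanishing of $\psi$, constant $4$ for $\gamma=1$), ending with a single scalar coefficient, affine in $\beta$, in front of $\|w\|_V^2$. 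The content of the proposition is that this coefficient is positive precisely for $\beta>-3/80$ --- a threshold independent of $L$ because the Hardy constants are --- so that $a_\lambda$ is coercive on $W$ in that range and fails to be at $\beta=-3/80$; this is what makes the result optimal. I expect this computation --- the careful bookkeeping of every boundary term generated by differentiating the singular weight, and the optimization of the Hardy constants that produces exactly $-3/80$ --- to be the main obstacle.

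Finally, condition~(iii) of Theorem~\ref{khanal} is obtained as in Proposition~\ref{prop1}. Given $g\in H$ and $v\in V$ with $a_\lambda(v,w)=(g,w)_H$ for all $w\in W$, testing against $w\in\mathcal{D}(0,L)$ gives $-v_{xxxxx}+\beta v_{xxx}+\lambda v=g$ in $\mathcal{D}'(0,L)$; since $g\in L^2_{\frac{1}{L-x}dx}$ and $v\in H^2_0(0,L)$, one has $v\in H^5(0,L-\varepsilon)$ for every $\varepsilon>0$ and, near $x=L$, the membership $v_{xxxxx}\in L^2_{\frac{1}{L-x}dx}$ forces $v_{xxxx}$ and the lower derivatives to extend continuously up to $x=L$ --- by Cauchy--Schwarz exactly as in \eqref{Q10}, with $\int_x^L(L-s)\,ds$ small in place of $\int_\varepsilon^L s^{-1}\,ds$ --- so that $v\in H^5(0,L)$, and then the equation yields $\beta v_{xxx}=v_{xxxxx}+\lambda v-g\in L^2_{\frac{1}{L-x}dx}$, which forces $v_{xxx}(L)=0$. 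Scaling the equation by $(L-x)^{-1}w$ on $(0,L-\varepsilon)$ and comparing, as $\varepsilon\to0$, with the variational identity (the analogue of \eqref{Q4}) gives the remaining natural boundary condition at $x=L$, so $v\in\mathcal{D}(A_2)$ and $(A_2-\lambda)v=g$. Hence $A_2-\lambda\colon\mathcal{D}(A_2)\to H$ is onto; being dissipative by the coercivity computation, it is maximal dissipative, and by the remark following Theorem~\ref{khanal} the operator $A_2$ generates a strongly continuous semigroup of contractions in $L^2_{\frac{1}{L-x}dx}$ whenever $\beta>-3/80$.
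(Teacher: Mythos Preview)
Your overall architecture matches the paper's --- a three-space Lax--Milgram/Hille--Yosida argument built on the bilinear form $a(v,w)=\int_0^L v_{xx}\bigl[\phi_{xxx}-\beta\phi_x\bigr]\,dx$ with $\phi=w/(L-x)$ --- but the particulars differ. The paper takes $V=\{u\in H^2_0:\ u_{xx}\in L^2_{(L-x)^{-2}dx}\}$ with the \emph{weighted} norm $\|u\|_V=\|(L-x)^{-1}u_{xx}\|_{L^2}$, and $W=H^3_0(0,L)$ with the \emph{unweighted} norm $\|u_{xxx}\|_{L^2}$; with this choice no $\lambda$-shift is needed, since $a(w,w)\ge c\|w\|_V^2$ holds directly for $\beta>-3/80$, and the same computation gives the dissipativity of $-A_2$ on its domain. (Incidentally, $\mathcal{D}(A_2)\not\subset W=H^3_0$ because $u_{xx}(0)$ need not vanish; the paper sidesteps this by invoking Hille--Yosida directly rather than condition~(iii) of Theorem~\ref{khanal}.)

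The genuine gap is in your coercivity mechanism. After integrating by parts one finds
\[
a(w,w)=\tfrac{5}{2}I_2-15I_1+60I_0+\tfrac{3\beta}{2}J_1-3\beta J_0,\qquad I_k:=\int_0^L\frac{(\partial_x^kw)^2}{(L-x)^{6-2k}}\,dx,
\]
with analogous $J_k$. The individual Hardy inequalities you propose to chain, $I_0\le\tfrac{4}{25}I_1$ and $I_1\le\tfrac{4}{9}I_2$, are sharp, but applied successively to the fifth-order block $\tfrac{5}{2}I_2-15I_1+60I_0$ they yield at best $(\tfrac{5}{2}-\tfrac{20}{3})I_2<0$: iterated one-dimensional Hardy cannot even establish nonnegativity of the $\beta$-free part, let alone the optimal threshold. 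What the paper uses instead is a \emph{two-parameter} completion of square (inequality \eqref{P1aa}, from \cite{khanal}): expanding $\int_0^L\bigl(\tfrac{w}{(L-x)^3}+q\tfrac{w_x}{(L-x)^2}+r\tfrac{w_{xx}}{L-x}\bigr)^2dx\ge0$ gives
\[
r^2I_2-(2r+3qr-q^2)I_1+(1-5q+20r)I_0\ge0,
\]
and it is the optimization over the pair $(q,r)$ --- the paper takes $r=\tfrac{1}{10}$, $q=\tfrac{11}{20}$ --- that produces exactly $\beta>-3/80$. Without this joint quadratic-form inequality your coercivity computation cannot close.
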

\begin{proof}
We will use Hille-Yosida Theorem, and (partially) Theorem \ref{khanal}. Let us consider
\be
\label{AAA1}
H=L^2_{ \frac{1}{L-x} dx}, \quad V=\{u\in H^2_0(0,L), \ u_{xx}\in L^2_{\frac{1}{ (L-x)^2} dx} \}, \quad W=H^3_0(0,L),  
\ee
be endowed respectively with the norms 
\be
\label{AAA2}
||u||_H = || (L-x)^{-\frac{1}{2}} u||_{L^2}, \quad ||u||_V  = || (L-x)^{-1} u_{xx} ||_{L^2}, \quad || u ||_W = ||u_{xxx}||_{L^2}.  
\ee
Using the estimates proved in \cite[Lemma 2.1]{khanal}, we know that $V$ endowed with $||\cdot||_V$ is a Hilbert space, and that the following inequalities holds
\be
\int_0^L \frac{u^{2}}{(L-x)^{6}} \mathrm{d} x \leqslant \frac{4}{25} \int_0^L \frac{u_{x}^{2}}{(L-x)^{4}} \mathrm{d} x\qquad \forall u\in V,
\label{P1}
\ee
and 
\be
 \int_0^L \frac{u_{x}^{2}}{(L-x)^{4}} \mathrm{d} x \leqslant \frac{4}{9} \int_0^L \frac{u_{x x}^{2}}{(L-x)^{2}} \mathrm{d} x \qquad \forall u\in V.
\label{P1a}
\ee
Additionally, the following estimate is provided in \cite[Lemma 2.1]{khanal}:
\be
r^{2} \int_0^L\frac{u_{x x}^{2}}{(L-x)^{2}} \mathrm{d} x-\left(2 r+3 q r-q^{2}\right) \int_0^L \frac{u_{x}^{2}}{(L-x)^{4}} \mathrm{d} x+(1-5 q+20 r) \int_0^L \frac{u^{2}}{(L-x)^{6}} \mathrm{d} x \geqslant 0,
\label{P1aa}
\ee
for any real number $r$ and $q$. 

Using the previous inequality, we get
\be
\label{P2}
||u||_H \le  L^{\frac{5}{2}}||(L-x)^{-3}u||_H \le \frac{16}{225} L^{\frac{5}{2}}||u||_V \qquad \forall u\in V. 
\ee
Thus $V\subset H$ with continuous embedding. From Poincar\'e inequality, we have that $|| \cdot ||_W$ is a norm
on $W$ equivalent to the $H^3-$norm. On the other hand, again from Hardy type inequality
\be
\label{P2P}
\int_0^L \frac{v_{xx}^2}{(L-x)^2} dx \le C \int_0^L v_{xxx} ^2dx,
\ee   
for all $v\in H^2(0,L)$, with $v_{xx}(L)=0$. Thus, we have that 
\begin{equation}
\label{P2P2}
||v||_V \le C ||v||_W\qquad \forall v \in W,
\end{equation}
which implies $W\subset V$ with continuous embedding. It is easily  seen that ${\mathcal D} (0,L)$ is dense in $H$, $V$ and $W$. Define, for $\beta\in\mathbb{R}$, the following bilinear form on $V \times W$:
\[
a(v,w) = \int_0^L  \left[ v_{xx} \left( \frac{w}{L-x}\right)_{xxx} -\beta v_{xx}\left(\frac{w}{L-x}\right)_x\right] dx\qquad (v,w)\in V\times W. 
\] 
Then, 
\begin{eqnarray*}
|a(v,w)| 
&\le &     \left\vert \int_0^L v_{xx}\left(\frac{w_{xxx} }{L-x} + 3\frac{w_{xx}}{ (L-x)^2 } \right)dx\right\vert\\
&&+\left\vert\int_0^L v_{xx}\left(6\frac{w_x}{ (L-x)^3} +6\frac{w}{(L-x)^4} -\beta\frac{w_x}{L-x} -\beta\frac{w}{(L-x)^2}\right)dx \right\vert \\
&\le & ||w_{xxx}||_{L^2} \left\vert\left\vert\frac{v_{xx}}{L-x}\right\vert\right\vert_{L^2} + 3\left\vert\left\vert \frac{w_{xx}}{L-x}\right\vert\right\vert_{L^2} \left\vert\left\vert\frac{v_{xx}}{L-x}\right\vert\right\vert_{L^2} \\
&&+\left\vert\left\vert\frac{v_{xx}}{L-x}\right\vert\right\vert_{L^2} \left( 6  \left\vert\left\vert\frac{w_x}{ (L-x)^2}\right\vert\right\vert_{L^2} +6\left\vert\left\vert\frac{w}{ (L-x)^3} \right\vert\right\vert_{L^2} +\beta\left\vert\left\vert\frac{w}{ (L-x)^2} \right\vert\right\vert_{L^2} + ||w_x||_{L^2} \right) \\
&\le& C ||v||_V ||w||_W
\end{eqnarray*}
by \eqref{P1}, \eqref{P1a}, \eqref{P2} and \eqref{P2P2}. 
This shows that $a$ is well defined and continuous. 

Let us prove the coercivity of $a$.  To do this, we rewrite $a$ as follows:
\begin{eqnarray*}
a(v,w)&=& \int_0^L v_{xx}\left(\frac{w_{xxx} }{L-x} + 3\frac{w_{xx}}{ (L-x)^2 }  + 6\frac{w_x}{ (L-x)^3}\right) dx  \\
&& + \int_0^L v_{xx}\left(6\frac{w}{(L-x)^4} -\beta\frac{w_x}{L-x} -\beta\frac{w}{(L-x)^2}\right) dx,
\end{eqnarray*} 
for $\beta\in\mathbb{R}$ and $(v,w)\in V\times W$. Thus, for any $w\in {\mathcal D} (0,L)$, yields that
\begin{eqnarray*}
a(w,w)&=& \int_0^L w_{xx}\left(\frac{w_{xxx} }{L-x} + 3\frac{w_{xx}}{ (L-x)^2 }  + 6\frac{w_x}{ (L-x)^3}\right) dx  \\
&& + \int_0^L w_{xx}\left(6\frac{w}{(L-x)^4} -\beta\frac{w_x}{L-x} -\beta\frac{w}{(L-x)^2}\right) dx\\
&=& \frac{5}{2} \int_0^L \frac{w^2_{xx}}{(L-x)^2} dx  -15 \int_0^L \frac{w_x^2}{(L-x)^4} dx+60\int_0^L \frac{w^2}{(L-x)^6} dx\\ &&+\frac{3}{2}\beta \int_0^L \frac{w_x^2}{(L-x)^2} dx -3\beta\int_0^L \frac{w^2}{(L-x)^4} dx.
\end{eqnarray*} 
Let us split the proof of coercivity in two cases.

\vspace{0.2cm}

\noindent \textbf{Case 1:} $\beta\geq0$. 

\vspace{0.1cm}

In this case, we apply \eqref{P1} and \eqref{P1a}, to obtain
\begin{eqnarray*}
\frac{3}{2}\beta \int_0^L \frac{w_x^2}{(L-x)^2} dx -3\beta\int_0^L \frac{w^2}{(L-x)^4} dx&\ge&\frac{3}{2}\beta \int_0^L \frac{w_x^2}{(L-x)^2} dx -\frac{4}{3}\beta\int_0^L \frac{w_x^2}{(L-x)^2} dx\\&=& \frac{\beta}{6} \int_0^L \frac{w_{x}^2}{ (L-x)^2 } dx.
\end{eqnarray*} 
Thus, 
\begin{eqnarray*}
a(w,w)&\ge&\frac{\beta}{6} \int_0^L \frac{w_{x}^2}{ (L-x)^2 } dx+ \frac{5}{2} \int_0^L \frac{w^2_{xx}}{(L-x)^2} dx  -15 \int_0^L \frac{w_x^2}{(L-x)^4} dx+60\int_0^L \frac{w^2}{(L-x)^6} dx,
\end{eqnarray*} 
or equivalently,
\begin{eqnarray*}
2a(w,w)&\ge&\frac{\beta}{3} \int_0^L \frac{w_{x}^2}{ (L-x)^2 } dx+ 5\int_0^L \frac{w^2_{xx}}{(L-x)^2} dx  -30 \int_0^L \frac{w_x^2}{(L-x)^4} dx+120\int_0^L \frac{w^2}{(L-x)^6} dx.
\end{eqnarray*} 
Now, ignoring the first term and applying \eqref{P1aa}, with $r=0.4$ and $q=1$,  we get 
$$
a(w,w) \ge 0.2\int_0^L\frac{w^2_{xx}}{(L-x)^2} dx.
$$
The result is also true for any $w\in W$, by density. Showing thus that the continuous bilinear form $a(v,w)$ is coercive for $\beta\geq0$, proving thus the case 1.

\vspace{0.2cm}

\noindent \textbf{Case 2:} $\beta<0$. 

\vspace{0.1cm}

Again, applying \eqref{P1} and \eqref{P1a}, we have
\begin{eqnarray*}
\frac{3}{2}\beta \int_0^L \frac{w_x^2}{(L-x)^2} dx -3\beta\int_0^L \frac{w^2}{(L-x)^4} dx&\ge&\frac{3}{2}\beta \int_0^L \frac{w_x^2}{(L-x)^2} dx \ge6\beta \int_0^L \frac{w_x^2}{(L-x)^4} dx \\&\ge& \frac{8}{3} \beta\int_0^L \frac{w_{xx}^2}{ (L-x)^2 } dx.
\end{eqnarray*} 
Thus, for $\beta<0$,
\begin{eqnarray*}
a(w,w)&\ge&\left(\frac{5}{2}+\frac{8}{3}\beta\right)\int_0^L \frac{w_{xx}^2}{ (L-x)^2 } dx  -15 \int_0^L \frac{w_x^2}{(L-x)^4} dx+60\int_0^L \frac{w^2}{(L-x)^6} dx,
\end{eqnarray*} 
equivalently, 
\begin{eqnarray*}
2a(w,w)&\ge&\left(5+\frac{16}{3}\beta\right)\int_0^L \frac{w_{xx}^2}{ (L-x)^2 } dx  -30 \int_0^L \frac{w_x^2}{(L-x)^4} dx+120\int_0^L \frac{w^2}{(L-x)^6} dx.
\end{eqnarray*} 
Applying \eqref{P1aa} with $q$ and $r$ satisfying 
$$1-5q+20r=4(2r+3qr-q^2)>0,$$
yields that
\begin{eqnarray*}
2a(w,w)&\ge&\left(5+\frac{16}{3}\beta-\frac{30r^2}{2r+3qr-q^2}\right)\int_0^L \frac{w_{xx}^2}{ (L-x)^2 } dx.
\end{eqnarray*} 
In order for $a $ to be coercive, $\beta$ has to satisfy 
$$\beta>\frac{15}{16}\left(\frac{6r^2}{2r+3qr-q^2}-1\right).$$
Finally, considering $r=1/10$ and $q=11/20$, we have the optimal range of $\beta $, that is, $$\beta>-\frac{3}{80}.$$
Therefore, 
\be
a(w,w) \ge \left(\frac{1}{5}+\frac{16}{3}\beta\right)\int_0^L\frac{w^2_{xx}}{(L-x)^2} dx \ge \gamma || w||^2_V,
\label{AAA11}
\ee
where \begin{equation}\label{gamma}
\gamma=\frac{1}{10}+\frac{8}{3}\beta,
\end{equation}
reaching the case 2, which is also true for any $w\in W$, by density. Thus, cases 1 and 2 proves that the continuous bilinear form $a(v,w)$ is coercive for $\beta>-\frac{3}{80}.$

Now, to finish the proof, let us show that  $-A_2$ is maximal dissipative. First, consider $g\in H$ be given. By Theorem \ref{khanal}, there is at least one solution $v\in V$ of 
\be
\label{P8}
a(v,w) =(g,w)_H \qquad \forall w\in W. 
\ee  
Consider $v\in V$ a solution, let us prove that $v\in {\mathcal D} (A_2)$. Taking any $w\in {\mathcal D} (0,L)$ in \eqref{P8} yields
\be
\label{P10}
-v_{xxxxx}+\beta v_{xxx}=g \qquad \text{ in } {\mathcal D}'(0,L). 
\ee 
As $g\in L^2(0,L)$ and $v\in H^2(0,L)$, we have that $v_{xxxxx}\in L^2(0,L)$, and $v\in H^5(0,L)$. Let us take, finally, $w$ of the form 
$$w(x)=x^3(L-x)^3 \overline{w} (x),$$ where 
$\overline{w}\in C^\infty ([0,L])$ is arbitrary chosen. Note that $w\in W$ and that $$\frac{w}{(L-x)} \in H^2_0(0,L)\cap C^\infty ( [0,L] ).$$ By simplicity, consider $\beta=1$ in \eqref{P10}. Multiplying
\eqref{P10} by $w/(L-x)$ and integrating over $(0,L)$, we obtain after comparing with \eqref{P8} that
\begin{equation*}
\begin{split}
0&=v_{xx} \left.\left(\frac{w}{L-x}\right)_{xx} \right\vert _0^L\\ &= -v_{xx} \left( 6x(L-x)\overline{w}(L+x) + 2x^2(L-x)\overline{w}_x(3L-5x)+x^3(2\overline{w}+(L-x)^2\overline{w}_{xx}) \right)\vert _0^L
\end{split}
\end{equation*}
i.e.,
\[ 0=2L^3v_{xx}(L)\overline{w}(L). \]
As $\overline{w} (L)$ can be chosen arbitrarily, we conclude that $v_{xx}(L)=0$. Using \eqref{P2P} we infer that $v_{xxx}\in H$, and hence 
$v_{xxxxx}=-g +\beta v_{xxx}\in H$. 
Therefore
 $v\in {\mathcal D} (A_2)$. Thus, we have that $A_2: {\mathcal D} (A_2) \to H$ is onto.

Lastly,  let us check that $-A_2$ is also dissipative in $H$. Pick any $w\in {\mathcal D} (A_2)$. Then we obtain after some integration by parts that 
\begin{eqnarray*}
 (-A_2 w , w)_H&=&(w_{xxxxx}-\beta w_{xxx} , w)_H\\&=& -\frac{5}{2} \int_0^L \frac{w^2_{xx}}{(L-x)^2} dx  +15 \int_0^L \frac{w_x^2}{(L-x)^4} dx-\frac{3}{2}\beta \int_0^L \frac{w_x^2}{(L-x)^2} dx \\ &&-60\int_0^L \frac{w^2}{(L-x)^6} dx+3\beta\int_0^L \frac{w^2}{(L-x)^4} dx -\frac{ w_{xx}^2(0) }{2L}.
\end{eqnarray*}
 Therefore, if $\beta\geq0$, thanks to the case 1, we get
 \[
 (-A_2 w, w)_H \le - 0.1||w||_W^2 -\frac{w_{xx}^2(0)}{2L} \le 0.
 \]
 By other hand, if $\beta<0$, using the case 2, yields that
  \[
 (-A_2 w, w)_H \le - \gamma||w||_W^2 -\frac{w_{xx}^2(0)}{2L} \le 0,
 \]
 for $\gamma$ defined by \eqref{gamma}. Therefore, we conclude that $-A_2$ is maximal dissipative for $\beta>-3/80$, and thus it generates a strongly continuous semigroup of contractions in  $H$ by Hille-Yosida Theorem, achieving the proof of the proposition.
 \end{proof}
The following result, ensure a global Kato smoothing effect, as is well-know for Kawahara equation \cite{CaKawahara,khanal}.
\begin{proposition}
\label{prop40}
Let $H$ and $V$ be as in \eqref{AAA1}-\eqref{AAA2}, and let $T>0$ be given. Then there exists some constant $C=C(L,T)$ such that for any $u_0\in H$, the solution 
$u(t)=e^{tA_2}u_0$ of \eqref{ec_lin} satisfies
\begin{equation*}
||u||_{L^\infty(0,T,H)} + ||u||_{L^2(0,T,V)} \le C ||u_0||_H. 
\end{equation*}
\end{proposition}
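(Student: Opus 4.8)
The plan is to prove the estimate first for smooth initial data $u_0\in\mathcal{D}(A_2)$ by a weighted energy identity, and then to reach arbitrary $u_0\in H$ by density together with the weak lower semicontinuity of the $L^2(0,T;V)$-norm. Here $H,V,W$ and the coercivity constant $\gamma$ are those of \eqref{AAA1}, \eqref{AAA2} and \eqref{gamma}, so that $\gamma>0$ precisely under the standing hypothesis $\beta>-3/80$; in particular the linear flow $(e^{tA_2})_{t\ge0}$ of Proposition \ref{prop2} is available.

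First I would treat $u_0\in\mathcal{D}(A_2)$. For such data $t\mapsto u(t)=e^{tA_2}u_0$ lies in $C([0,T];\mathcal{D}(A_2))\cap C^1([0,T];H)$, so $u(t)\in W\subset V$ for every $t$, the map $t\mapsto\|u(t)\|_V$ is continuous, and $t\mapsto\|u(t)\|_H^2$ is $C^1$ with
\[
\tfrac{d}{dt}\|u(t)\|_H^2=2\bigl(A_2u(t),u(t)\bigr)_H .
\]
The integration by parts against the singular weight $(L-x)^{-1}$ carried out in the proof of Proposition \ref{prop2} is legitimate here because membership in $\mathcal{D}(A_2)$ supplies exactly the vanishing at $x=L$ that keeps every boundary term finite; combining that identity with the coercivity bound \eqref{AAA11} gives, for all $w\in\mathcal{D}(A_2)$,
\[
\bigl(A_2w,w\bigr)_H\le-\gamma\|w\|_V^2,
\]
the extra boundary contribution $w_{xx}(0)^2/(2L)$ arising in that computation having a favourable sign and being discarded. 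Hence $\frac{d}{dt}\|u(t)\|_H^2+2\gamma\|u(t)\|_V^2\le0$ on $(0,T)$; integrating on $(0,T)$,
\[
\|u(T)\|_H^2+2\gamma\int_0^T\|u(t)\|_V^2\,dt\le\|u_0\|_H^2 .
\]
Since $t\mapsto\|u(t)\|_H$ is non-increasing this yields $\|u\|_{L^\infty(0,T;H)}\le\|u_0\|_H$ and $\|u\|_{L^2(0,T;V)}\le(2\gamma)^{-1/2}\|u_0\|_H$, i.e.\ the asserted inequality with $C=1+(2\gamma)^{-1/2}$ (in fact independent of $T$).

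For general $u_0\in H$, choose $u_0^n\in\mathcal{D}(A_2)$ with $u_0^n\to u_0$ in $H$ — possible since $\mathcal{D}(0,L)\subset\mathcal{D}(A_2)$ is dense in $H$ — and set $u^n=e^{tA_2}u_0^n$. Strong continuity of the semigroup gives $u^n\to u$ in $C([0,T];H)$, while the step above shows $(u^n)$ is bounded in $L^2(0,T;V)$; extracting a subsequence, $u^n\rightharpoonup\widetilde u$ weakly in $L^2(0,T;V)$, and since $V\hookrightarrow H$ continuously and $u^n\to u$ in $L^2(0,T;H)$ one must have $\widetilde u=u$, so $u\in L^2(0,T;V)$. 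Weak lower semicontinuity of the norm together with $\|u^n\|_{L^\infty(0,T;H)}\le\|u_0^n\|_H$ then passes the estimate to the limit with the same constant $C$.

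The only genuinely delicate point is the justification of the energy identity and, above all, of the integration by parts near the singular endpoint $x=L$; this is exactly why one works first with $u_0\in\mathcal{D}(A_2)$, where $u(t)$ inherits the boundary vanishing that renders every endpoint term harmless, and only afterwards removes the regularity assumption through the density and weak-compactness argument. Everything else is the routine Gronwall-type bookkeeping displayed above.
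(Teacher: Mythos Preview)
Your proof is correct and follows essentially the same approach as the paper: reduce to $u_0\in\mathcal{D}(A_2)$ by density, use the energy identity $\frac{d}{dt}\|u\|_H^2=2(\text{generator}\,u,u)_H$, invoke the coercivity bound \eqref{AAA11} (together with the favourable-sign boundary term $w_{xx}(0)^2/(2L)$) to control $\|u\|_V^2$, and integrate in time. The paper's write-up is terser---it simply says ``a density argument'' and ``an integration over $(0,T)$''---while you spell out the weak-compactness and lower-semicontinuity step for the $L^2(0,T;V)$ bound, but the substance is identical.
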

\begin{proof}
First, we notice that ${\mathcal D}(A_2)$ is dense in $H$, so that it is sufficient to prove the result when $u_0\in {\mathcal D}(A_2)$.
Note that the estimate  $||u||_{L^\infty(0,T,H)} \le C ||u_0||_H$ is a consequence of classical semigroup theory.
 Assume $u_0\in {\mathcal D}(A_2)$, so that $u_t=-A_2u$ in the classical sense. Taking the inner product in $H$ with $u$ yields 
\[ (u_t,u)_H=-a(u,u) \le -C ||u||_V^2 \]
as done in  \eqref{AAA11}. Finally, an integration over $(0,T)$ completes the proof of the estimate of $||u||_{ L^2(0,T,V) } $. 
\end{proof}

\begin{remark}
Note that we can use the same approach to get the Propositions \ref{prop2} and \ref{prop40} for the Kawahara operator, that is, $Au=u_{xxxxx}-u_{xxx}-u_{x}$. In fact, the results follow considering the following bilinear form in $V\times W$
\begin{equation}\label{aA}
a(v,w) := \int_0^L v_{xx}\left(-\left(\frac{w}{L-x}\right)_x+\left(\frac{w}{L-x}\right)_{xxx}\right)dx-\int_0^L v_x\left(\frac{w}{L-x}\right) dx,
\end{equation}
for  $v\in V$ and $w\in W$.
\end{remark}
 
\subsection{Non-homogeneous system} We will consider in this subsection the well-posedness of the Kawahara nonhomogeneous system, namely
\begin{equation}
\left\{
\begin{array}
[c]{lll}%
u_{t}+u_{x}+u_{xxx}-u_{xxxxx}=f(x,t) &  & \text{in }(  0,T)  \times(
0,L)  \text{,}\\
u(  t,0)  =u(  t,L)  =u_{x}(  t,0)=u_{x}(  t,L)=u_{xx}(  t,L)  =0 &  &
\text{in }(  0,T)  \text{,}\\
u(  0,x)  =u_{0}(  x)  &  & \text{in }(
0,L)  \text{.}%
\end{array}
\right.  \label{H5}%
\end{equation}
More precisely, we are interested to prove the existence of a ``reasonable'' solution when $f\in L^2(0,T,H^{-2}(0,L))$. 

\begin{proposition}
\label{prop20}
Let $u_0\in L^2_{xdx}$ and $f\in L^{2}(  0,T;H^{-2}(  0,L)  )  $. Then there exists a unique solution $u\in C([0,T],L^2_{xdx} )\cap L^2(0,T,H^2(0,L))$ 
to \eqref{H5}. Furthermore, there a constant $C>0$ such that 
\be
\label{H700}
||u||_{L^\infty (0,T,L^2_{xdx})} + ||u||_{L^2(0,T,H^2(0,L))} \le C\big( ||u_0||_{L^2_{xdx}} + || f ||_{ L^2 (0,T,H^{-2}(0,L) } \big) . 
\ee
\end{proposition}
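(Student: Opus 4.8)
The plan is to reduce \eqref{H5} to two ingredients already at hand — the $C_0$-semigroup generated by the Kawahara operator in $L^2_{xdx}$ and the associated Kato smoothing — then to establish an a priori estimate in $L^2_{xdx}$, and finally to obtain $u$ as a limit of smooth solutions; the necessarily generalized meaning of the last boundary condition $u_{xx}(t,L)=0$, together with uniqueness, will be handled by transposition against the backward adjoint problem.

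\textbf{Step 1 (homogeneous problem and smoothing).} By the Remark after Proposition \ref{prop1}, the operator $Au=u_{xxxxx}-u_{xxx}-u_x$, with the domain of that proposition, generates a $C_0$-semigroup $(e^{tA})_{t\ge 0}$ on $H:=L^2_{xdx}$; this is exactly the operator in \eqref{H5}, since there $u_t=Au+f$. Running the proof of Proposition \ref{prop40} with $V:=H^2_0(0,L)$ normed by $\|v\|_V=\|v_{xx}\|_{L^2}$ and with the bilinear form of that Remark, the corresponding coercivity estimate — after the $\lambda$-shift which, as in Proposition \ref{prop1}, is absorbed by a Hardy-type inequality — gives $(Aw,w)_H\le -c\|w\|_V^2+\lambda\|w\|_H^2$ for $w\in\mathcal D(A)$ and some $c>0$, $\lambda\ge0$. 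A Gronwall argument then yields
\[
\|e^{\cdot A}u_0\|_{L^\infty(0,T;L^2_{xdx})}+\|e^{\cdot A}u_0\|_{L^2(0,T;H^2(0,L))}\le C(L,T)\,\|u_0\|_{L^2_{xdx}},
\]
Poincaré's inequality being used to pass from $\|\cdot\|_V$ to the $H^2$-norm; note that the four boundary conditions $u=u_x=0$ at $x=0,L$ persist, so that in fact $e^{tA}u_0\in H^2_0(0,L)$ for a.e.\ $t$.

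\textbf{Step 2 (a priori estimate and existence).} If $u$ is regular enough that $u_t=Au+f$ holds classically in $H$ — say $u(t)\in\mathcal D(A)$ and $u\in C^1([0,T];H)$ — then $\tfrac12\tfrac{d}{dt}\|u\|_H^2=(Au,u)_H+(f,u)_H$; since $(f,u)_H=\int_0^L f\,(xu)\,dx=\langle f,xu\rangle_{H^{-2},H^2_0}$ with $xu\in H^2_0(0,L)$ and $\|xu\|_{H^2_0}\le C\|u\|_V$, Step 1 and Young's inequality give $\tfrac12\tfrac{d}{dt}\|u\|_H^2\le -\tfrac c2\|u\|_V^2+\lambda\|u\|_H^2+C\|f\|_{H^{-2}}^2$, and Gronwall's lemma produces precisely \eqref{H700}. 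To build $u$ in general, choose $u_{0,n}\in\mathcal D(A)$ with $u_{0,n}\to u_0$ in $L^2_{xdx}$ and $f_n\in C^1([0,T];L^2(0,L))$ with $f_n\to f$ in $L^2(0,T;H^{-2}(0,L))$ (both dense). As $f_n(t)\in L^2(0,L)\subset L^2_{xdx}$, the Duhamel formula $u_n(t)=e^{tA}u_{0,n}+\int_0^te^{(t-s)A}f_n(s)\,ds$ yields a classical solution $u_n\in C([0,T];\mathcal D(A))\cap C^1([0,T];L^2_{xdx})$ of \eqref{H5} with data $(u_{0,n},f_n)$, to which the a priori estimate applies; applied to the differences $u_n-u_m$ it shows that $(u_n)$ is Cauchy in $C([0,T];L^2_{xdx})\cap L^2(0,T;H^2_0(0,L))$. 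Its limit $u$ solves the equation in $\mathcal D'((0,T)\times(0,L))$, satisfies the four Dirichlet-type boundary conditions, and inherits \eqref{H700}.

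\textbf{Step 3 (the condition $u_{xx}(t,L)=0$, uniqueness, and the main obstacle).} Because $u(t)$ is only $H^2$ in space for a.e.\ $t$, the trace $u_{xx}(t,L)$ is not classically defined and this last boundary condition is imposed through transposition: multiplying \eqref{H5} by a solution $\varphi$ of the backward adjoint problem $-\varphi_t-\varphi_x-\varphi_{xxx}+\varphi_{xxxxx}=\psi$ with the complementary boundary conditions $\varphi(t,0)=\varphi_x(t,0)=\varphi_{xx}(t,0)=\varphi(t,L)=\varphi_x(t,L)=0$ and integrating over $(0,T)\times(0,L)$ produces an identity in which, for the approximants $u_n$, every boundary contribution cancels — precisely because $u_n$ satisfies all five primal conditions while $\varphi$ satisfies the five complementary ones — and this identity passes to the limit. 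Here one needs that the adjoint operator is well posed, with its own smoothing, in the dual weighted space $L^2_{x^{-1}dx}=(L^2_{xdx})'$ (so that $\varphi\in L^2(0,T;H^2_0(0,L))$ and $\varphi(0)\in L^2_{x^{-1}dx}$, making $\langle f,\varphi\rangle$ and $(u_0,\varphi(0))$ meaningful), which is obtained by re-running Steps 1–2 for the adjoint operator; the same identity with $u_0=0$, $f=0$ forces $(\bar u(t),\varphi_T)=0$ for $\varphi_T$ in a dense subset of $L^2_{x^{-1}dx}$, hence $\bar u\equiv0$, i.e.\ uniqueness. The delicate point is not Steps 2–3, which are routine once the functional setting is fixed, but rather Step 1 for this particular weighted operator (though it is essentially already contained in Propositions \ref{prop1} and \ref{prop40}) and, above all, the correct set-up of this transposition framework — identifying the right adjoint problem, proving it is well posed in $L^2_{x^{-1}dx}$ with a smoothing effect, and checking the cancellation of all boundary terms — since this is what confers a rigorous meaning on \eqref{H5} when the data are as rough as $u_0\in L^2_{xdx}\supsetneq L^2(0,L)$ and $f\in L^2(0,T;H^{-2}(0,L))$.
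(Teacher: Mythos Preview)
Your argument is correct and, for the existence and the estimate \eqref{H700}, essentially coincides with the paper's: both derive the key inequality by testing the equation against $xu$ (equivalently, computing $(Au,u)_H$ and $(f,u)_H=\langle f,xu\rangle_{H^{-2},H^2_0}$), absorb the lower-order term $\int_0^L|u|^2dx$ into $\|u\|_V^2+\|u\|_H^2$, apply Gronwall, and pass to general data by density.

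The one genuine difference is your Step~3. The paper disposes of uniqueness in a single line (``classical semigroup theory'') and never discusses how the condition $u_{xx}(t,L)=0$ survives in the limit; you instead set up a transposition framework against the backward adjoint problem, which---after the change $(t,x)\mapsto(T-t,L-x)$---is exactly the problem treated in Proposition~\ref{prop2} in $L^2_{(L-x)^{-1}dx}$, so your claim that ``re-running Steps 1--2 for the adjoint'' suffices is justified. This is more work than the paper does, but it is also more honest: since $f\in L^2(0,T;H^{-2})$ need not lie in $L^1(0,T;L^2_{xdx})$, the mild-solution/semigroup argument the paper alludes to does not apply directly, and your transposition identity is what actually pins down both uniqueness and the meaning of the fifth boundary condition at this level of regularity.
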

\begin{proof}
Assume first that $u_0\in {\mathcal D}(A_1)$ and $f\in C^0([0,T],{\mathcal D} (A_1))$. 
Multiplying  \eqref{H5} by 
$xu$ and integrating over $(0,\tau ) \times (0,L)$ where $0<\tau <T$ yields
\begin{equation}\label{H8} 
\begin{split}
\frac{1}{2}\int_0^L x|u(\tau ,x)|^2 dx &-\frac{1}{2}\int_0^L x|u_0(x)|^2 dx +\frac{5}{2}\int_0^\tau \!\!\!\int_0^L |u_{xx}|^2 dxdt\\ &+\frac{3}{2}\int_0^\tau \!\!\!\int_0^L |u_x|^2 dxdt
-\frac{1}{2}\int_0^\tau \!\!\!\int_0^L |u|^2 dx dt = \int_0^\tau\!\!\!\int_0^L xuf dxdt. 
\end{split}
\end{equation}
We denote $\langle .,. \rangle _{H^{-2},H^2_0}$ the duality pairing between $H^{-2}(0,L)$ and $H^2_0(0,L)$. Thus, for all $\varepsilon >0$,  we have that
\begin{equation*}
\int_0^\tau\!\!\!\int_0^L xuf  dxdt  = \int_0^\tau \langle f, xu\rangle _{ H^{-2},H^2_0 } \le \frac{\varepsilon}{2} \int_0^\tau \!\!\!\int_0^L u_x^2 dxdt + 
C_\varepsilon \int_0^\tau || f ||^2_{H^{-2}} dt. 
\end{equation*}
The last term in the left hand side of \eqref{H8} is decomposed as follows
\[
\frac{1}{2}\int_0^\tau \!\!\!\int_0^L |u|^2 dxdt = \frac{1}{2}\int_0^\tau\!\!\!\int_0^{\sqrt{\varepsilon}}  |u|^2 dxdt  + 
\frac{1}{2}\int_0^\tau \!\!\! \int_{\sqrt{\varepsilon}}^L |u|^2 dxdt =:I_1 + I_2.
\]
The following inequalities are verified
\begin{equation}\label{H11}
I_1 \le  \frac{\varepsilon}{2} \int_0^\tau \!\!\!\int_0^L |u_x|^2 dxdt
\end{equation}
and 
\begin{equation}\label{H12}
I_2 \le   \frac{1}{2\sqrt{\varepsilon} } \int_0^\tau \!\!\!\int_0^L  x |u|^2 dxdt. 
\end{equation}
Indeed, as \eqref{H12} is obvious, we prove \eqref{H11}. Note that $u(0,t)=0$, thus 
\[
|u(x,t)|\le \int_0^{\sqrt{\varepsilon}} |u_x| dx \le \varepsilon ^\frac{1}{4} \big( \int_0^{\sqrt{\varepsilon}}  |u_x|^2 dx\big) ^\frac{1}{2},
\]
 for $(t,x)\in (0,T)\times (0, \sqrt{\varepsilon} )$.  Hence 
\[
\int_0^{\sqrt{\varepsilon}} |u|^2 dx \le \varepsilon \int_0^{\sqrt{\varepsilon}} |u_x|^2 dx,
\]
which gives \eqref{H11} after integrating over $t\in (0,\tau )$.

Putting \eqref{H11} and \eqref{H12} in \eqref{H8}, we obtain that
\begin{equation*}
\begin{split}
\frac{1}{2} \int_0^L x|u(\tau , x)|^2 dx  &+\frac{5}{2}\int_0^\tau \!\!\!\int_0^L |u_{xx}|^2 dxdt + (\frac{3}{2}-\varepsilon ) \int_0^\tau \!\!\!\int_0^L |u_x|^2 dxdt \\
&\le \frac{1}{2} \int_0^L x|u_0(x)|^2 dx + \frac{1}{2\sqrt{\varepsilon}} \int _0^\tau\!\!\!\int_0^L x|u|^2 dxdt + C_\varepsilon \int_0^\tau || f ||^2_{H^{-2}}dt,
\end{split}
\end{equation*}
for $0<\varepsilon<L^2$. Taking $\varepsilon\in(L^2,\min\{0,3/2\})$ and applying Gronwall's Lemma, yields that
\[
||u||^2_{L^\infty (0,T,L^2_{xdx}) } + ||u_{xx}||^2_{L^2(0,T,L^2(0,L)) } \le 
C(T) \big( ||u_0||^2_{L^2_{xdx}} +||f||^2_{L^2(0,T,H^{-2}(0,L))} \big)  .
\]
Which proves the inequality \eqref{H700} for $u_0\in D(A_1)$ and $f\in C^0([0,T],D(A_1))$. A density argument allows us to construct a solution
$u\in C([0,T],L^2_{xdx}) \cap L^2(0,T,H^2(0,L))$ of \eqref{H5} satisfying \eqref{H700}  for $u_0\in L^2_{xdx}$ and $f\in L^2(0,T,H^{-2}(0,L))$. Finally, with respect to uniqueness, this follows from classical semigroup theory.
\end{proof}

Our aim in the next proposition is to obtain a similar result in the spaces $H$ and $V$ defined by \eqref{AAA1}-\eqref{AAA2}. To do that, we limit ourselves to the situation 
when $f=(\rho (x)h)_{xx}$ with $h\in L^2(0,T,L^2(0,L))$.  Consider $Au=u_{xxxxx}-u_{xxx}-u_x$ with domain 
\[ 
{\mathcal D}(A)=\{ u\in H^5(0,L)\cap H^2_0(0,L); \ u_{xxxxx}\in L^2_{ \frac{1}{L-x} dx} \text{ and }  \ u_{xx}(L)=0 \} \subset L^2_{ \frac{1}{L-x} dx} .
\]

\begin{proposition}
\label{prop45} 
Let $u_0\in H$, $h\in L^2(0,T,L^2(0,L))$ and set $f:=(\rho (x)h)_{xx}$.  Then there exists a unique solution $$u\in C([0,T],H)\cap L^2(0,T,V)$$ 
to \eqref{H5}. Furthermore, there is some constant $C>0$ such that
\be
\label{BBB}
||u||_{L^\infty (0,T,H)} + ||u||_{L^2(0,T,V ) } \le C\big( ||u_0||_{H} + || h ||_{ L^2 (0,T,L^2(0,L))  } \big) . 
\ee
\end{proposition}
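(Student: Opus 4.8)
The plan is to mimic the structure of the proof of Proposition \ref{prop20}, replacing the weight $x\,dx$ by $(L-x)^{-1}dx$ and using the coercivity estimate \eqref{AAA11} together with the smoothing estimate of Proposition \ref{prop40} as the source of the $L^2(0,T,V)$ control. As before, I would first establish the a priori estimate \eqref{BBB} for smooth data, namely $u_0\in{\mathcal D}(A)$ and $h\in C^0([0,T],{\mathcal D}(A))$ (or at least smooth enough that $f=(\rho h)_{xx}$ is an admissible forcing for the classical semigroup solution), and then pass to the general case by density, exactly as in Proposition \ref{prop20}. The existence part uses the semigroup $(e^{tA})_{t\ge 0}$ from the Remark following Proposition \ref{prop2} via Duhamel's formula $u(t)=e^{tA}u_0+\int_0^t e^{(t-s)A}f(s)\,ds$; uniqueness is again standard semigroup theory once the energy estimate is in hand.

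The core is the energy identity: multiply \eqref{H5} by $(L-x)^{-1}u$ and integrate over $(0,\tau)\times(0,L)$. The terms coming from $u_x+u_{xxx}-u_{xxxxx}$ reproduce, after integration by parts, exactly the quadratic form $a(u,u)$ analysed in the proof of Proposition \ref{prop2} (with the Kawahara bilinear form \eqref{aA}), so by \eqref{AAA11} they yield $\frac12\frac{d}{dt}\|u\|_H^2 + \gamma\|u\|_V^2 \le (\text{boundary/lower-order terms}) + \langle f,(L-x)^{-1}u\rangle$. The only genuinely new point is controlling the forcing term: since $f=(\rho h)_{xx}$ with $h\in L^2(0,T,L^2(0,L))$, I would integrate by parts twice to write
\[
\int_0^\tau\!\!\!\int_0^L (\rho h)_{xx}\,\frac{u}{L-x}\,dx\,dt = \int_0^\tau\!\!\!\int_0^L \rho(x)\, h\, \Big(\frac{u}{L-x}\Big)_{xx}dx\,dt,
\]
provided $\rho$ vanishes to sufficient order at $x=L$ (which is why $f$ is assumed of this special form), and then bound $\big\|\rho(x)(u/(L-x))_{xx}\big\|_{L^2}$ by $C\|u\|_V$ using the Hardy-type inequalities \eqref{P1}, \eqref{P1a} collected from \cite[Lemma 2.1]{khanal}; a Cauchy--Schwarz/Young split then absorbs $\varepsilon\|u\|_V^2$ into the coercive term and leaves $C_\varepsilon\|h\|_{L^2}^2$. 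The lower-order contributions (from $u_x$ and from the localization near $x=L$) are handled as in Proposition \ref{prop20}, by splitting the $x$-integral near the endpoint and using $u(t,L)=u_x(t,L)=0$ to trade a power of $(L-x)$ for a derivative, exactly as inequalities \eqref{H11}--\eqref{H12} do near $x=0$ there.

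With the differential inequality $\frac{d}{dt}\|u\|_H^2 + \gamma\|u\|_V^2 \le C\|u\|_H^2 + C\|h\|_{L^2}^2$ in place, Gronwall's lemma gives $\|u\|_{L^\infty(0,T,H)}^2 + \|u\|_{L^2(0,T,V)}^2 \le C(T)\big(\|u_0\|_H^2 + \|h\|_{L^2(0,T,L^2)}^2\big)$, which is \eqref{BBB}; the density argument then transfers this to arbitrary $u_0\in H$ and $h\in L^2(0,T,L^2(0,L))$, yielding $u\in C([0,T],H)\cap L^2(0,T,V)$. The main obstacle I anticipate is the forcing-term estimate: one must check that the double integration by parts against $(L-x)^{-1}u$ produces no uncontrolled boundary terms at $x=L$ (this is exactly where the structural hypothesis $f=(\rho h)_{xx}$, with $\rho$ suitably degenerate at $L$, is used) and that $(u/(L-x))_{xx}$ is genuinely bounded in $L^2$ by the $V$-norm after multiplication by $\rho$ — the weights $(L-x)^{-2},(L-x)^{-3},(L-x)^{-4}$ that appear upon differentiating must all be dominated using \eqref{P1}--\eqref{P1a}, and $\rho$ must be chosen to kill the worst of these near the endpoint.
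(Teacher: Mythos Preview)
Your approach is essentially the paper's: take the inner product in $H$, invoke the coercivity of the bilinear form \eqref{aA} to produce the $\|u\|_V^2$ term, bound $(f,u)_H$ after integrating by parts, then close with Gronwall and density. The paper's proof follows exactly this route (its printed computation of $(f,u)_H$ even contains typos that your two-integrations-by-parts version corrects).

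One misconception to flag, since it could mislead you when you carry out the details: you write that $\rho$ must ``vanish to sufficient order at $x=L$'' and ``kill the worst of these [weights] near the endpoint''. In fact $\rho\equiv 1$ on $(L-\nu/2,L)$ by \eqref{H4}; it vanishes near $x=0$, not near $x=L$. The boundary terms at $x=L$ in the two integrations by parts vanish because $u(t,L)=u_x(t,L)=0$ (and $u_{xx}(t,L)=0$ for $u\in{\mathcal D}(A)$), not because of $\rho$. More importantly, the estimate $\big\|(u/(L-x))_{xx}\big\|_{L^2}\le C\|u\|_V$ holds on the whole interval with no help from $\rho$: expanding gives $\frac{u_{xx}}{L-x}+\frac{2u_x}{(L-x)^2}+\frac{2u}{(L-x)^3}$ (the worst power is $(L-x)^{-3}$, not $(L-x)^{-4}$), and each term is dominated by $\|u\|_V=\|(L-x)^{-1}u_{xx}\|_{L^2}$ via \eqref{P1}--\eqref{P1a}. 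So the structural hypothesis $f=(\rho h)_{xx}$ is there to permit two integrations by parts and to localize the support, not to temper any singularity at $L$.

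A minor simplification: the endpoint splitting in the style of \eqref{H11}--\eqref{H12} is unnecessary here. Since $a(u,u)\ge\gamma\|u\|_V^2$ and $|(f,u)_H|\le C\|h\|_{L^2}\|u\|_V$, Young's inequality gives directly $\frac{d}{dt}\|u\|_H^2+\gamma\|u\|_V^2\le C\|h\|_{L^2}^2$ (plus at most a harmless $C\|u\|_H^2$ from lower-order pieces of the Kawahara form), and Gronwall finishes.
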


\begin{proof}
Assume that $u_0\in {\mathcal D}(A)$ and  $h\in C_0^\infty ((0,T)\times (0,L))$, so that $f\in C^1([0,T],H)$.  
Taking the inner product of $u_t-Au-f=0$ with $u$ in $H$ yields 
\be
\label{AAA90}
 (u_t,u)_H=-a(u,u)+(f,u)_H \le  15 \int_0^L \frac{u_x^2}{(L-x)^4} dx+\frac{3}{2} \int_0^L \frac{u_x^2}{(L-x)^2} dx+(f,u)_H,
 \ee
 where $a(v,w)$ is defined by \eqref{aA}.
Then
\begin{eqnarray*}
| (f,u)_H| &=& \vert \int_0^L (\rho (x)h)_x \frac{u}{L-x} dx \vert \\
&=& \vert \int_0^L \rho (x) h \big( \frac{u_{x}}{L-x} + \frac{u}{ (L-x)^2 } \big) dx \vert \\ 
&\le&  C  ||h||_{L^2} ( || \frac{u_{x}}{L-x} ||_{L^2} + || \frac{u}{ (L-x)^2 } ||_{L^2} )  \\
&\le& C||h||_{L^2} (||u||_V+||u||_H),
\end{eqnarray*}
where we used  \eqref{P1a} in the last line. 
Thus, we have that 
\[
|(f ,u)_H |  \le \frac{C}{2} || u ||_V^2+ \frac{C}{2} || u ||_H^2 + C' || h ||_{L^2}^2. 
\]
Additionally, Hard type inequality gives
\begin{eqnarray*}
15 \int_0^L \frac{u_x^2}{(L-x)^4} dx+\frac{3}{2} \int_0^L \frac{u_x^2}{(L-x)^2} dx
&\le&  C(L)   \int_0^L \frac{u_x^2}{(L-x)^4} dx  \\
&\le&  C(L) \left(  \int_0^L \frac{u_{xx}^2}{(L-x)^4} dx +   \int_0^L \frac{u^2}{(L-x)^6} dx \right)\\
&\le& C||u||_{H}+C ||u||_V,
\end{eqnarray*}
when combined with \eqref{AAA90}, gives after integration over $(0,\tau)$ for $0<\tau<T$
\[
||u(\tau )||_{H} ^2 + C\int_0^\tau (||u||^2_V+||u||^2_H)  dt \le  ||u_0||_H^2 + C'' \big( \int_0^\tau( ||u||_H^2+||u||^2_V) dt + \int_0^\tau \!\!\!\int_0^L |h|^2 dxdt \big). 
\]
 An application of Gronwall's Lemma yields
 \eqref{BBB} for  $u_0\in {\mathcal D} (A)$ and $h\in C_0^\infty ((0,T)\times (0,L))$. 
A density argument allows us to construct a solution
$u\in C([0,T],H) \cap L^2(0,T,V)$ of \eqref{H5} satisfying \eqref{BBB}  for $u_0\in H$ and $h\in L^2(0,T,L^2(0,L))$.
The uniqueness follows from classical semigroup theory.  
\end{proof}

\section{Exact controllability for Kawahara equation\label{Sec2}}
Pick any function $\rho \in C^\infty (0,L)$ with 
\be
\label{H4}
\rho (x)= \left\{
\begin{array}{ll}
0 \quad &\textrm{ if } \ 0<x<L-\nu ,\\
1 &\textrm{ if }\  L-\frac{\nu}{2}  <x<L,
\end{array} 
\right.  
\ee
for some $\nu \in (0,L)$. This section is devoted to the investigation of the exact controllability problem for  the system%
\begin{equation}
\left\{
\begin{array}
[c]{lll}%
u_{t}+u_{x}+uu_{x}+u_{xxx}-u_{xxxxx}= f, &  & \text{in }(
0,T)  \times(  0,L)  \text{,}\\
u(  t,0)  =u(  t,L)  =u_{x}(  t,0)=u_{x}(  t,L)=u_{xx}(  t,L)  =0 &  &
\text{in }(  0,T)  \text{,}\\
u(  0,x)  =u_{0}(  x)  &  & \text{in }(
0,L)  \text{,}%
\end{array}
\right.  \label{ec1}%
\end{equation}
where $f= (\rho (x)h)_{xx} $. We aim to find a control input $h\in L^{2}(  0,T;L^{2}( 0,L)  )  $. Actually, with $(\rho (x)h(t,x))_{xx}$ in some space of functions, to 
guide the system described by (\ref{ec1}) in the time interval $[0,T]$ from any
(small) given initial state $u_{0}$ in $L^2_{\frac{1}{L-x} dx}$ to any (small) given terminal state $u_{T}$ in the same space. We first consider the linearized system,
and next proceed to the nonlinear one. To prove the main theorem we will need the results involving some weighted Sobolev spaces which was proved on the Section \ref{Sec1}.

\subsection{Exact controllability: Linearized system}
Our attention in this section is related to the control properties of the linear system
\begin{equation}\label{H21}
\left\{
\begin{array}
[c]{lll}%
u_{t}+u_{x}+u_{xxx}-u_{xxxxx}=  (\rho (x)h)_{xx}  &  & \text{in }(
0,T)  \times(  0,L)  \text{,}\\
u(  t,0)  =u(  t,L)  =u_{x}(  t,0)=u_{x}(  t,L)=u_{xx}(  t,L)  =0 &  &
\text{in }(  0,T)  \text{,}\\
u(  0,x)  =u_{0}(  x)  &  & \text{in }(
0,L)  \text{.}%
\end{array}
\right. 
\end{equation}
\begin{theorem}
\label{thm11}
Let $T>0$ , $\nu \in (0,L)$ and $\rho (x)$ as in \eqref{H4}. Then there exists a continuous linear operator
$$\Gamma : L^2_{\frac{1}{L-x} dx}\to L^2(0,T,L^2(0,L))\cap L^2_{(T-t)dt} (0,T,H^2(0,L))$$ such that for any $u_1\in L^2_{\frac{1}{L-x} dx}$, the solution 
$u$  of \eqref{H21} with  $u_0=0$ and $h=\Gamma (u_1)$ satisfies $u(T,x)=u_1(x)$ in $(0,L)$. 
\end{theorem}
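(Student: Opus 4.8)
The plan is to prove Theorem~\ref{thm11} by the Hilbert Uniqueness Method, i.e.\ by duality with the backward adjoint problem
\[
\left\{
\begin{array}{ll}
-\varphi_t-\varphi_x-\varphi_{xxx}+\varphi_{xxxxx}=0 & \text{in }(0,T)\times(0,L),\\
\varphi(t,0)=\varphi(t,L)=\varphi_x(t,0)=\varphi_x(t,L)=\varphi_{xx}(t,0)=0 & \text{in }(0,T),\\
\varphi(T,x)=\varphi_T(x) & \text{in }(0,L),
\end{array}\right.
\]
whose boundary conditions are exactly those produced when the spatial operator of \eqref{H21} is integrated by parts against $\varphi$. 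First one needs the backward well-posedness and global Kato smoothing of this system in the \emph{dual space} $L^2_{(L-x)dx}$ --- the counterpart, with the roles of the endpoints $0$ and $L$ exchanged, of Propositions~\ref{prop2} and \ref{prop40}, obtained by the same Lax--Milgram scheme --- so that every $\varphi_T\in L^2_{(L-x)dx}$ yields a unique $\varphi\in C([0,T];L^2_{(L-x)dx})$ with $\rho\varphi_{xx}\in L^2(0,T;L^2(0,L))$, depending continuously on $\varphi_T$. Since \eqref{H21} with $u_0=0$ and $h\in L^2(0,T;L^2(0,L))$ has, by Proposition~\ref{prop45}, a unique solution $u\in C([0,T];L^2_{\frac{1}{L-x}dx})\cap L^2(0,T;H^2(0,L))$, a density argument together with \eqref{H4} and the matching boundary conditions gives the duality identity
\[
\int_0^L u(T,x)\,\varphi_T(x)\,dx=\int_0^T\!\!\int_0^L \rho(x)\,h(t,x)\,\varphi_{xx}(t,x)\,dx\,dt .
\]
As the pairing between $L^2_{\frac{1}{L-x}dx}$ and $L^2_{(L-x)dx}$ is the plain $L^2(0,L)$ scalar product, $u(T)=u_1$ holds if and only if $\int_0^L u_1\varphi_T\,dx=\int_0^T\!\int_0^L \rho h\,\varphi_{xx}\,dx\,dt$ for all $\varphi_T$ in a dense subset, so the theorem is equivalent to the observability inequality $\int_0^L (L-x)|\varphi_T|^2\,dx\le C\int_0^T\!\int_0^L \rho^2|\varphi_{xx}|^2\,dx\,dt$ for all solutions $\varphi$ of the adjoint problem.

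To establish this inequality I would use multipliers. Multiplying the adjoint equation by $(L-x)\varphi$ and integrating by parts --- all boundary terms at $x=L$ disappearing because the weight $L-x$ does, those at $x=0$ because of the boundary conditions, exactly as in the proofs of Propositions~\ref{prop1} and \ref{prop2} --- yields the energy identity
\[
\frac{d}{dt}\int_0^L (L-x)\,\varphi^2\,dx=-\int_0^L\varphi^2\,dx+3\int_0^L\varphi_x^2\,dx+5\int_0^L\varphi_{xx}^2\,dx ,
\]
hence, integrating in time and using Poincaré's inequality, $\int_0^L(L-x)|\varphi_T|^2\,dx\le C\big(\|\varphi(0)\|_{L^2(0,L)}^2+\int_0^T\|\varphi_{xx}\|_{L^2(0,L)}^2\,dt\big)$. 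A second multiplier argument with a cut-off function equal to $1$ on $\mathrm{supp}\,\rho$ (in the spirit of \cite{Rosier,CaPaRo}) localizes $\int_0^T\|\varphi_{xx}\|_{L^2(0,L)}^2\,dt$ to $(0,T)\times\mathrm{supp}\,\rho$ up to a lower-order remainder, leading to a \emph{weak} observability inequality
\[
\int_0^L (L-x)\,|\varphi_T|^2\,dx\le C\int_0^T\!\!\int_0^L \rho^2\,|\varphi_{xx}|^2\,dx\,dt+C\,\|\varphi\|^2_{L^2(0,T;L^2_{(L-x)dx})} .
\]
The remainder is then removed by a compactness--uniqueness argument: if the clean inequality were false, one would obtain a sequence of terminal data, normalized in $L^2_{(L-x)dx}$, with vanishing observation; the Kato smoothing effect and the Aubin--Lions lemma produce a strongly convergent subsequence, leaving a solution $\varphi\not\equiv 0$ of the adjoint problem with $\varphi_{xx}\equiv 0$ on $(0,T)\times(L-\tfrac{\nu}{2},L)$, where $\rho\equiv 1$. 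On that strip $\varphi$ is affine in $x$, so $\varphi(t,L)=\varphi_x(t,L)=0$ forces $\varphi\equiv 0$ there; since the operator $\partial_t+\partial_x+\partial_x^3-\partial_x^5$ has constant coefficients and the planes $\{x=\mathrm{const}\}$ are non-characteristic, Holmgren's uniqueness theorem (or, equivalently, a Fourier transform in $t$ reducing the question to unique continuation for a linear ODE in $x$) gives $\varphi\equiv 0$ on $(0,T)\times(0,L)$, hence $\varphi_T=0$, a contradiction. This step is unobstructed here because the observation region is a genuine subinterval abutting the endpoint $x=L$, where both $\varphi$ and $\varphi_x$ vanish, so no critical-length obstruction can arise.

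Once the observability inequality is available, the operator $\Gamma$ is produced in the spirit of Theorem~\ref{khanal}. Let $\mathcal H$ be the completion of $\mathcal D(0,L)$ for the norm $\|\varphi_T\|_{\mathcal H}:=\big(\int_0^T\!\int_0^L\rho^2|\varphi_{xx}|^2\,dx\,dt\big)^{1/2}$, which is a genuine norm thanks to the observability inequality and the unique continuation property. The bilinear form $B(\psi_T,\varphi_T):=\int_0^T\!\int_0^L\rho^2\,\psi_{xx}\,\varphi_{xx}\,dx\,dt$ is bounded and coercive on $\mathcal H$, while the linear form $\varphi_T\mapsto\int_0^L u_1\,\varphi_T\,dx$ is continuous on $\mathcal H$ (this continuity is exactly the observability inequality combined with Cauchy--Schwarz); by Lax--Milgram there is a unique $\psi_T\in\mathcal H$ with $B(\psi_T,\varphi_T)=\int_0^L u_1\varphi_T\,dx$ for all $\varphi_T\in\mathcal H$. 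Setting $\Gamma(u_1):=h:=\rho\,\psi_{xx}$, the duality identity shows that the solution of \eqref{H21} with $u_0=0$ and this $h$ satisfies $u(T)=u_1$, and $\|h\|_{L^2(0,T;L^2(0,L))}=\|\psi_T\|_{\mathcal H}\le C\,\|u_1\|_{L^2_{\frac{1}{L-x}dx}}$; the problem being linear, $\Gamma$ is a continuous linear operator. Finally, the extra regularity $\Gamma(u_1)\in L^2_{(T-t)dt}(0,T;H^2(0,L))$ follows from a time-weighted, second-order refinement of the adjoint smoothing estimate --- multiply the adjoint equation by $(T-t)\varphi$ and iterate the local gain of two spatial derivatives --- which gives $\varphi\in L^2_{(T-t)dt}(0,T;H^4)$ near $\mathrm{supp}\,\rho$, whence $h=\rho\psi_{xx}\in L^2_{(T-t)dt}(0,T;H^2(0,L))$.

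The main obstacle is the observability inequality. Two features make it delicate: one must first develop the backward well-posedness and Kato smoothing of the adjoint semigroup in the degenerate weighted space $L^2_{(L-x)dx}$ (the dual analogue of Propositions~\ref{prop2}--\ref{prop40}), and one must reconcile the degeneracy of the weight $L-x$ at $x=L$ with the $\rho$-localization in the multiplier identities and pick an intermediate topology making the remainder in the weak observability inequality genuinely compact; by comparison, the unique continuation ingredient is routine.
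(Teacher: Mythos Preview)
Your proposal is correct and follows essentially the same architecture as the paper: duality via HUM, a multiplier estimate producing a weak observability inequality, removal of the lower-order remainder by compactness--uniqueness together with Holmgren's theorem, and finally a time-weighted smoothing estimate for the extra regularity of $\Gamma$. The differences are in the execution, and they are worth noting.

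First, where you propose to redevelop the backward adjoint theory in $L^2_{(L-x)dx}$ from scratch, the paper simply performs the change of variables $w(t,x)=v(T-t,L-x)$, turning the adjoint into a forward problem of the same type as \eqref{ec_lin} and allowing direct reuse of Proposition~\ref{prop20}. This is purely cosmetic but saves work.

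Second, your two-step multiplier scheme (first $(L-x)\varphi$, then a cut-off to localize $\int_0^T\|\varphi_{xx}\|_{L^2(0,L)}^2$) can be made to work but is less efficient than the paper's single multiplier $q(t,x)w=(T-t)b(x)w$, with $b$ equal to $x$ near $0$ and constant for $x>\nu/2$. The time factor $(T-t)$ kills the unwanted time-boundary term (your $\|\varphi(0)\|_{L^2}^2$), and the choice of $b$ with $b'$ supported in $(0,\nu/2)$ makes the principal term $\frac{5}{2}\int\!\!\int q_x w_{xx}^2$ already localized to the observation region; the only remainder left is the genuinely compact term $\int_0^T\!\int_0^L w^2\,dx\,dt$, and \eqref{H45}--\eqref{H48} follow directly. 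Your localization step, as written, is a bit delicate: for the fifth-order operator the sign of the leading contribution $\frac{5}{2}\int c'\varphi_{xx}^2$ forces $c'\le 0$ with support in $(L-\nu/2,L)$, so the cut-off cannot simply be ``$1$ on $\mathrm{supp}\,\rho$''; one must instead take $c(x)=L-x$ near $x=L$ and constant to the left, together with a time weight $t$ (not $T-t$) to remove the $t=0$ boundary term. This is exactly the paper's $b$ after the change of variables.

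Third, for the additional regularity $h\in L^2_{(T-t)dt}(0,T;H^2)$, your ``iterate the local gain of two derivatives'' is the right intuition, but the paper carries it out differently: it shows $w\in L^2(0,T;H^7)$ when $w_0\in{\mathcal D}(A)$ (since then $Aw\in L^2(0,T;H^2_0)$), interpolates with the basic Kato smoothing $w_0\in L^2\Rightarrow w\in L^2(0,T;H^2)$ to get $w_0\in H^2_0\Rightarrow w\in L^2(0,T;H^3)$, and then uses a Fubini trick
\[
\int_0^T s\,\|w(s)\|_{H^3}^2\,ds=\int_0^T\!\!\int_t^T\|w(s)\|_{H^3}^2\,ds\,dt\le C\int_0^T\|w(t)\|_{H^2_0}^2\,dt\le C\|w_0\|_{L^2_{xdx}}^2
\]
to trade the initial regularity for the time weight. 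Your route via a $(T-t)$-weighted energy identity would require more care to reach the same conclusion.
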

\begin{proof}
We will use the Hilbert Uniqueness Method  (see e.g. \cite{Lions1}). 
Consider the following adjoint system associated to \eqref{H21}:
\begin{equation}\label{H25}
\left\{
\begin{array}
[c]{lll}%
-v_t+v_{xxxxx}-v_{xxx}-v_x= 0,&  & \text{in }(
0,T)  \times(  0,L)  \text{,}\\
v(t,0) = v(t,L) = v_x(t,0)=v_x(t,L)=v_{xx}(t,0) =0&  & \text{in }(
0,T)  \text{,}\\
v(T,x)=v_T(x). &  & \text{in }(  0,L).
\end{array}
\right. 
\end{equation}
If $u_0\equiv 0$, $v_T\in {\mathcal D} (0,L)$, and $h\in {\mathcal D} ((0,T)\times (0,L))$, multiplying \eqref{H21} by $v$ and integrating over $(0,T)\times (0,L)$, yields that
\[
\int_0^L u(T,x)v_T(x)dx =\int_0^T\!\!\!\int_0^L (\rho (x)h)_{xx} v dxdt =\int_0^T\!\!\!\int_0^L \rho (x)h v_{xx} dxdt.
\]
Considering the usual change of variables $x\to L -  x$, $t\to T-t$ and using Proposition \ref{prop20}, gives
\[
||v||_{L^\infty (0,T,L^2_{ (L-x)dx})} + || v || _{L^2(0,T,H^2 (0,L))} \le C ||v_T ||_{L^2_{ (L-x)dx} }.
\] 
By a density argument, we obtain that for all $h\in L^2(0,T,L^2(0,L))$ and all $v_T\in L^2_{(L-x)dx}$,
\[
\langle u(T,\cdot), v_T\rangle _{L^2_{ \frac{1}{L-x} dx }, L^2_{(L-x)dx}}
=\int_0^T (h,\rho (x)v_{xx})_{L^2}dt,
\]
where  $u$ and $v$  denote the solutions of \eqref{H21} and \eqref{H25}, respectively, and 
$\left\langle \cdot,\cdot\right\rangle _{L^2_{ \frac{1}{L-x} dx}, L^2_{(L-x)dx}}$ denotes the duality pairing between
$L^2_{\frac{1}{L-x} dx}$ and $L^2_{( L-x)  dx}$. We have to prove the following observability inequality
\begin{equation}
\label{observabilite}
||v_T||^2_{L^2_{ (L-x)dx}} \le C \int_0^T\!\!\!\int_0^L |\rho (x)v_{xx}|^2dxdt
\end{equation}
or, equivalently, letting $w(t,x)=v(T-t,L-x)$, 
\be
\label{H30}
||w_0||^2 _{L^2_{xdx}} \le C \int_0^T\!\!\!\int_0^L |\rho (L-x)w_{xx}|^2 dxdt,
\ee
where $w$ solves
\be
\label{H40}
\left\{ 
\begin{array}{l}
w_t-w_{xxxxx}+w_{xxx}+w_x=0,\\
w(t,0)=w(t,L)=w_x(t,0)=w_x(t,L)=w_{xx}(t,L)=0,\\
w(0,x)=w_0(x).
\end{array}
\right.
\ee
Multiplying \eqref{H40} by $wq$, for $q(t,x)=(T-t)b(x)\in C^\infty ([0,T]\times [0,L])$  where $b\in C^\infty ([0,L]) $ is nondecreasing defined by 
\[
b(x) = \left\{ 
\begin{array}{ll}
x & \text{ if }\  0<x<\nu /4 ,\\
1 & \text{ if }\  \nu /2 <x<L,
\end{array}
\right.
\]
with $\nu\in(0,L)$, after integrating by parts we have
\begin{equation*}
\begin{split}
-\int_0^T\!\!\! \int_0^L (q_t+q_x&+q_{xxx}-q_{xxxxx})\frac{w^2}{2} dxdt + \int_0^L (q\frac{w^2}{2})(T,x)dx- \int_0^L (q\frac{w^2}{2})(0,x)dx \\&+\frac{3}{2}\int_0^T\!\!\!\int_0^L q_xw_x^2 dxdt 
+\frac{5}{2}\int_0^T\!\!\!\int_0^L q_x w_{xx}^2 dxdt+\int_0^T\left(q\frac{w_{xx}^2}{2}\right)(t,0)dt =0.
\end{split}
\end{equation*}
Due the choose of $q(t,x)$ and $b(x)$, this yields
\begin{equation}\label{H45}
\begin{split}
||w_0||^2_{L^2_{xdx}}  
&\le C(L,\nu ) \int_0^L b(x) w_0^2 (x) dx  \\
&\le C(T,L,\nu ) \left(  \int_0^T\!\!\! \int_0^{\frac{\nu}{2}}  w_{x}^2 dxdt+ \int_0^T\!\!\! \int_0^{\frac{\nu}{2}}  w_{xx}^2 dxdt + \int_0^T\!\!\!\int_0^L w^2 dxdt\right) \\
&\le C(T,L,\nu ) \left(  \int_0^T\!\!\! \int_0^{\frac{\nu}{2}}  w_{xx}^2 dxdt + \int_0^T\!\!\!\int_0^L w^2 dxdt\right) ,
\end{split}
\end{equation}
using Poincaré inequality. We claim that
\be
\label{H48}
||w_0||^2_{L^2_{xdx}} \le C \int_0^T\!\!\!\int_0^{\frac{\nu}{2}} w_{xx}^2 dxdt,
\ee
holds. In fact, if the estimate \eqref{H48} does not occurs, then one can find a sequence $\{w_0^n\}\subset L^2_{xdx}$ such that 
\be
\label{H50}
1=||w_0^n||^2_{L^2_{xdx}} >n \int_0^T\!\!\!\int_0^{\frac{\nu}{2}} |w_{xx}^n|^2 dxdt,
\ee
where $w^n$ denotes the solution of \eqref{H40} with $w_0$ replaced by $w_0^n$.  By \eqref{H700} and \eqref{H50}, $\{ w^n\} $ is bounded 
in $L^2(0,T,H^2(0,L))$, hence also in $H^1(0,T,H^{-3}(0,L))$ thanks the equation \eqref{H40}. Extracting a subsequence if necessary, Aubin-Lions' Lemma  ensures that
$$w^n\to w \qquad \text{ in } L^2(0,T,H^2(0,L)).$$ Thus, using \eqref{H45} and \eqref{H50}, we see that $w_0^n$ is a Cauchy sequence in $L^2_{xdx}$,
and hence it converges strongly in this space. Let $w_0$ denote its limit in $L^2_{xdx}$, and let $w$ denote the corresponding solution of \eqref{H40}.
Then 
$$||w_0||_{L^2_{xdx}} = 1$$
and 
$$w^n \to w \qquad \text{ in } L^2(0,T,H^2(0,L)).$$
But $w_{xx}^n\to 0$ in $L^2(0,T,L^2(0,\nu /2))$ by \eqref{H50}. Thus $w_{xx}\equiv 0$ in $(0,T)\times (0,\nu/2 )$, and hence 
$w(t,x) = xg(t)+c(t)$ (for some functions $g$ and $c$) in $(0,T)\times (0,\nu /2 )$. Since $w$ satisfies \eqref{H40}, we infer from $w(t,0)=w_x(t,0)=0$ that $w\equiv 0$ in $(0,T)\times (0,\nu/2 )$. By Holmgren's theorem we have that  $w\equiv 0$
in $(0,T)\times (0,L)$, implying that $w(0,x)=0$, which is a contradiction with $|| w_0||_{L^2_{xdx}}=1$.  Therefore \eqref{H48} is proved, 
and \eqref{H30} follows.

Let us now apply the Hilbert Uniqueness Method. Consider the following operator 
$$\Lambda :  L^2_{ (L-x) dx} \to L^2_{ (L-x) dx}$$
defined by $$\Lambda (v_T) = (L-x)^{-1} u(T,\cdot)\in L^2_{ (L-x) dx},$$ where $u$ solves \eqref{H21} with 
$h=\rho (x) v_{xx}$. Then operator $\Lambda$ is clearly continuous. On the other hand, from
\eqref{observabilite}
\[
\big( \Lambda (v_T), v_T\big)_{L^2_{ (L-x)dx }} = \langle u(T,\cdot), v_T\rangle _{L^2_{ \frac{1}{L-x} dx }, L^2_{(L-x)dx}} 
=\int_0^T ||\rho (x)v_{xx}||^2_{L^2} dt \ge C ||v_T||^2_{L^2_{(L-x) dx}} ,
\]
and it follows that the map $v_T \to \Lambda (v_T)$ is invertible in $L^2_{ (L-x)dx } $. 

Define the map $$\Gamma: \ L^2_{\frac{1}{L-x} dx} \to L^2(0,T,L^2(0,L))$$ by
$\Gamma (u) = h := \rho (x)v_{xx}$, 
where $v$ is the solution of \eqref{H25} with $v_T=\Lambda ^{-1}( (L-x)^{-1}u(T,\cdot)$. 

Firstly, $\Gamma$ is continuous, and the solution $u$ of \eqref{H21} with $u_0=0$ and $h=\Gamma (u)$ satisfies $u(T,\cdot)=u_1$. 
To prove that $\Gamma $  is also continuous from  $L^2_{\frac{1}{L-x}dx}$  into  
$L^2_{ (T - t ) dt} (0,T,H^2(0,L))$, it is sufficient to show the following
estimate 
\[
\int_0^T ||v (t) ||^2_{H^3}  (T-t) dt \le C ||v_T||^2_{L^2_{(L-x)dx}},
\]
for the solutions of \eqref{H25} or, equivalently,
\be
\label{H60}
\int_0^T ||w||^2_{H^3}\, tdt \le C ||w_0||^2_{L^2_{xdx}},
\ee
for the solutions of \eqref{H40}.  
Thanks to Proposition \ref{prop20},  we have
\be
\label{H61}
\int_0^T ||w||^2_{H^2_0(0,L)} dt \le C ||w_0||^2_{L^2_{xdx}},
\ee
which yields, for $w_0\in L^2(0,L)$, that
$$
\int_0^T ||w||^2_{H^2_0(0,L)} dt \le C ||w_0||^2_{L^2}.
$$
Assume now that $w_0\in {\mathcal D}(A)$ and let $u_0=Aw_0=w_{0,xxxxx}-w_{0,xxx}-w_{0,x}$.
Denote by $w$ (resp. $u$) the solution of \eqref{H40} with initial data $w_0$ (resp. $u_0$). Then
\[
Aw=w_{xxxxx}-w_{xxx}-w_x=u\in L^2(0,T,H^2_0(0,L)), 
\]
and we infer that $w\in L^2(0,T,H^7(0,L))$. By interpolation, this gives that $$w\in L^2(0,T,H^3(0,L))$$ if $w_0\in H^2_0(0,L)$, with an estimate of the form
\be
\label{H63}
\int_0^T ||w||^2_{H^3(0,L)} dt \le C ||w_0||^2_{ H^2_0(0,L) }.
\ee
The different constants $C$ in \eqref{H61}-\eqref{H63} may be taken independent of $T$ for $0<T<T_0$. Thus, finally, due to Fubini's Theorem we get 
\[
\int_0^T s||w(s)||^2_{H^3} ds = \int_0^T\!\!\!  \left( \int_t^T ||w(s)||^2_{H^3}ds\right) dt \le C \int_0^T ||w (t) ||^2_{H^2_0(0,L)} dt \le C ||w_0||^2_{L^2_{xdx}}. 
\]  
This completes the proof of \eqref{H60} and, consequently, Theorem \ref{thm11} is shown.
\end{proof}

\begin{remark}
It is important to note that the forcing term $f=(\rho (x)h)_{xx}\in L^2_{(T-t) dt}(0,T,L^2(0,L))$ is in fact supported in $(0,T)\times (L-\nu ,L)$.  
\end{remark}

\subsection{Exact controllability: Nonlinear system}Let us prove the local exact controllability in $L^2_{\frac{1}{L-x} dx}$ of the system  (\ref{ec1}). Note that the solutions of (\ref{ec1}) can be written as 
\[
u=u_{L}+u_{1}+u_{2}\text{,}%
\]
where $u_{L}$ is the solution of (\ref{ec_lin}) with
initial data $u_{0}  \in L^2_{ \frac{1}{L-x}  dx}$, $u_{1}$ is solution of
\begin{equation}
\left\{
\begin{array}
[c]{lll}%
u_{1,t}+u_{1,x}+u_{1,xxx}-u_{1,xxxxx}= f = (\rho (x)h)_{xx}  &  & \text{in }( 0,T)  \times(  0,L)  \text{,}\\
u_{1}(  t,0)  =u_{1}(  t,L)  =u_{1,x}(  t,0)=u_{1,x}(  t,L) =u_{1,xx}(  t,L)=0 &  & \text{in }(  0,T)  \text{,}\\
u_{1}(  0,x)  =0 &  & \text{in }(  0,L)
\end{array}
\right.  \label{non_1}%
\end{equation}
with $h=h(  t,x)  \in L^{2}(  0,T;L^2( 0,L)  ) $, 
and $u_{2}$ is solution of
\begin{equation}
\left\{
\begin{array}
[c]{lll}%
u_{2,t}+u_{2,x}+u_{2,xxx}-u_{2,xxxxx}=g(  t,x)  &  & \text{in }(
0,T)  \times(  0,L)  \text{,}\\
u_{2}(  t,0)  =u_{2}(  t,L)  =u_{2,x}(  t,0)=u_{2,x}(  t,L)=u_{2,xx}(  t,L)
=0 &  & \text{in }(  0,T)  \text{,}\\
u_{2}(  0,x)  =0 &  & \text{in }(  0,L)  \text{,}%
\end{array}
\right.    \label{non_2}
\end{equation}
with $g=g(t,x)  = - uu_{x}$.

The following result is concerned with the solutions of the non-homogeneous system \eqref{non_2}.

\begin{proposition}
\label{prop_weight} 
Consider $H$ and $V$ defined as in \eqref{AAA1}-\eqref{AAA2}.
\begin{itemize}
\item[(i)] If $u,v\in L^{2}(  0,T;V )$, then $uv_{x}\in L^{1}(  0,T;H)  $. Furthermore, the map%
\[
(u,v)\in L^{2}(  0,T;V)^2  \to uv_{x}\in
L^{1}(  0,T;H)
\]
is continuous and there exists a constant $c>0$ such that
\begin{equation}
\left\Vert uv_{x}\right\Vert _{L^{1}(  0,T;H)  }\leq c\left\Vert u\right\Vert  _{L^{2}( 0,T;V)  }  \left\Vert v\right\Vert  _{L^{2}( 0,T;V)  }  
\text{.} \label{non_3}
\end{equation}
\item[(ii)]  For $g\in L^{1}(  0,T;H)  $, the mild solution $u$ of \eqref{non_2} given by Duhamel formula satisfies
\[
u_{2}\in C(  \left[  0,T\right]  ;H)  \cap L^{2}(  0,T;V)
=: \mathcal{G}
\]
and we have the estimate 
\be
\label{CCC}
||u_2 ||_{L^\infty (0,T,H) } + ||u _2||_{L^2(0,T,V)} \le C ||g||_{L^1(0,T,H)}.
\ee
\end{itemize}
\end{proposition}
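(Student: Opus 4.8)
The plan is to treat the two items separately: item (i) is a pointwise-in-time nonlinear estimate that reduces to the Hardy-type inequalities already at hand, while item (ii) is a linear smoothing/energy estimate for the Duhamel formula.

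For (i), I would fix $t$ and bound $\|u(t)v_x(t)\|_H^2 = \int_0^L (L-x)^{-1}u^2 v_x^2\,dx$ by pulling out the sup norm of $v_x$. Since $V\hookrightarrow H^2_0(0,L)\hookrightarrow C^1([0,L])$, and because $(L-x)^2\le L^2$ on $[0,L]$ gives $\|v_{xx}\|_{L^2}\le L\|v\|_V$ by the definitions \eqref{AAA1}--\eqref{AAA2}, Poincaré's inequality yields $\|v_x\|_{L^\infty}\le C\|v\|_V$. Hence $\|uv_x\|_H \le \|v_x\|_{L^\infty}\,\|u\|_H \le C\|v\|_V\,\|u\|_H \le C\|v\|_V\|u\|_V$, where the last step is exactly \eqref{P2} (which in turn rests on \eqref{P1}--\eqref{P1a}). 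Integrating in $t$ and applying Cauchy--Schwarz in $t$ gives \eqref{non_3}, and continuity of the bilinear map follows at once from bilinearity. This step is routine; the only care needed is that $u(t),v(t)\in V$ holds only for a.e.\ $t$, which is enough for measurability and for concluding $uv_x\in L^1(0,T;H)$.

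For (ii), write the mild solution as $u_2(t)=\int_0^t e^{(t-s)A}g(s)\,ds$. The bound $\|u_2\|_{L^\infty(0,T;H)}\le \|g\|_{L^1(0,T;H)}$ is immediate from the fact that $(e^{tA})_{t\ge0}$ is a contraction semigroup on $H$; this is established through Proposition \ref{prop2} and the remark following Proposition \ref{prop40}, where $-A$ is shown to be maximal dissipative for the Kawahara operator $Au=u_{xxxxx}-u_{xxx}-u_x$. Continuity $u_2\in C([0,T];H)$ then follows by a standard density argument. For the $L^2(0,T;V)$ bound I would argue by density: take $g$ smooth first so that $u_2$ is a strong solution, pair $u_{2,t}=Au_2+g$ with $u_2$ in $H$ to get $\tfrac12\tfrac{d}{dt}\|u_2\|_H^2 = -a(u_2,u_2)+(g,u_2)_H$ with $a$ the Kawahara bilinear form \eqref{aA}, invoke the coercivity $a(w,w)\ge\gamma\|w\|_V^2$ (valid by the remark after Proposition \ref{prop40}), integrate over $(0,T)$, and absorb the source via $\int_0^T|(g,u_2)_H|\,dt \le \|u_2\|_{L^\infty(0,T;H)}\|g\|_{L^1(0,T;H)} \le \|g\|_{L^1(0,T;H)}^2$. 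This yields $\gamma\|u_2\|_{L^2(0,T;V)}^2\le\|g\|_{L^1(0,T;H)}^2$, hence \eqref{CCC} after passing to the limit; uniqueness is classical semigroup theory. Alternatively one can feed Proposition \ref{prop40} directly into the variation-of-constants formula via Minkowski's integral inequality, $\|u_2\|_{L^2(0,T;V)}\le\int_0^T\bigl\|\mathbf{1}_{\{t>s\}}e^{(t-s)A}g(s)\bigr\|_{L^2_t(0,T;V)}\,ds\le C\int_0^T\|g(s)\|_H\,ds$.

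The main obstacle is precisely the $L^2(0,T;V)$ regularity in (ii) with a source that is only $L^1$ in time: a naive energy estimate pairs with an $L^2$-in-time forcing, so the trick is to first secure the a priori $L^\infty(0,T;H)$ bound from the contraction semigroup and then use it to absorb $\int_0^T(g,u_2)_H\,dt$ — the same mechanism already used in the proof of Proposition \ref{prop45}. A secondary point requiring attention is verifying that the Kawahara bilinear form \eqref{aA}, with its lower-order terms, remains coercive on $V$; this is covered by the remark after Proposition \ref{prop40}, since the extra contribution $-\int_0^L v_x\,(L-x)^{-1}v\,dx$ is nonpositive after one integration by parts.
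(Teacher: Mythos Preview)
Your proposal is correct and follows essentially the same approach as the paper. The only notable difference is in (i): the paper splits as $\|uv_x\|_H\le\|u\|_{L^\infty}\|(L-x)^{-1/2}v_x\|_{L^2}\le C\|u\|_V\|v\|_V$ (pulling $u$ into $L^\infty$ and keeping the weight on $v_x$), whereas you pull $v_x$ into $L^\infty$ and keep the weight on $u$ via \eqref{P2}; both are one-line consequences of the same embeddings and Hardy inequalities. For (ii) the paper does exactly your first argument---energy identity, coercivity of $a$, and absorption of $\int_0^T(g,u_2)_H\,dt$ using the a priori $L^\infty(0,T;H)$ bound from the contraction semigroup---though it writes the differential inequality with an extra harmless $C'\|u_2\|_H^2$ term rather than invoking clean coercivity of \eqref{aA}; your observation that the added first-order contribution is nonnegative makes this term unnecessary.
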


\begin{proof}
For $u,v\in V$, we have 
\[
||uv_x||_{L^2_{ \frac{1}{L-x}dx }} \le || u ||_{L^\infty} || \frac{v_x}{\sqrt{L-x} }||_{L^2}\le C ||u||_V ||v||_V,
\]
and (i) holds. For (ii), we first assume that $g\in C^1([0,T],H)$, so that $u_2\in C^1([0,T],H)\cap C^0([0,T],{\mathcal D} (A_2))$. Taking the inner product
of $u_{2,t}=A_2u_2+g$ with $u_2$  in $H$ yields
\begin{equation*}
(u_{2,t},u_2)_H \le -C||u_2||^2_V + C' ||u_2||_H^2 + (g,u_2)_H 
\end{equation*}
where $C,C'$ denote some positive constants.
Integrating over $(0,T)$ and using the classical estimate 
\[ 
||u_2||_{L^\infty (0,T,H) } \le C ||g||_{L^1(0,T,H)}  
\]  
coming from semigroup theory, we obtain  (ii) when $g\in C^1([0,T],H)$. The general case ($g\in L^1(0,T,H)$) follows by density.  \end{proof}

Let $\Theta _1 (h):=u_1$ and $\Theta _2(g):=u_2$, where $u_1$ (resp. $u_2$) denotes the solution of \eqref{non_1} (resp. \eqref{non_2}). 
Then  $$\Theta_{1}:L^{2}(  0,T;L^2(  0,L)  )  \to \mathcal{G}$$ and $$\Theta_{2}:L^{1}(  0,T;L^2_{ \frac{1}{L-x}  dx}      )\to \mathcal{G}$$  are
well-defined continuous operators, due the Propositions \ref{prop45} and \ref{prop_weight}. 

Using Proposition \ref{prop_weight} and the contraction mapping principle, one can prove as in \cite{CaKawahara,GG,khanal} 
the existence and uniqueness of
a solution  $u\in {\mathcal G} $ of \eqref{ec1} when the initial data $u_0$ and the forcing term $h$ are small enough. As the proof is similar to those of Theorem 
\ref{exact_nlin}, we will omit it.  

We are in position to prove the main result of Section \ref{Sec3}, namely the (local) exact controllability of system \eqref{ec1}.

\begin{theorem}
\label{exact_nlin} Let $T>0$. Then there exists $\delta>0$ such that for any
$u_{0}$, $u_{1}\in L_{  \frac{1}{L-x}  dx}^{2}$ satisfying
$$\left\Vert u_{0}\right\Vert _{L^2_{  \frac{1}{L-x}  dx}} \leq \delta \quad\text{ and }	\quad    \left\Vert u_{1}\right\Vert _{L^2_{  \frac{1}{L-x}  dx}}\leq\delta,$$ 
one can find a control function $h\in L^2(  0,T;L^2 (  0,L)  )  $ such that the solution $u\in {\mathcal G} $ of (\ref{ec1})
satisfies $u(  T,\cdot)  =u_1$ in $(  0,L)  $.
\end{theorem}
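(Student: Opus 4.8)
The plan is to combine the exact controllability of the linearized system (Theorem~\ref{thm11}) with a fixed point argument based on the contraction mapping principle, following the now-standard strategy used in \cite{CaKawahara,GG,khanal}. First I would reduce to the case $u_0=0$: writing $u=u_L+z$, where $u_L=e^{tA}u_0$ solves \eqref{ec_lin} with initial data $u_0$, the problem becomes that of steering $z$ from $0$ at $t=0$ to the new target $\tilde u_1:=u_1-u_L(T,\cdot)$, which still lies in $L^2_{\frac{1}{L-x}dx}$ with norm controlled by $\delta$ thanks to the semigroup estimate of Proposition~\ref{prop2}. So without loss of generality we look for $h\in L^2(0,T,L^2(0,L))$ and $u\in\mathcal G=C([0,T],H)\cap L^2(0,T,V)$ solving \eqref{ec1} with $u_0=0$ and $u(T,\cdot)=\tilde u_1$.

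The second step is to set up the fixed point map. Given $u\in\mathcal G$, define $g:=-uu_x$; by Proposition~\ref{prop_weight}(i) we have $g\in L^1(0,T,H)$ with $\|g\|_{L^1(0,T,H)}\le c\|u\|_{L^2(0,T,V)}^2$. Let $u_2:=\Theta_2(g)$ be the corresponding solution of \eqref{non_2}, which by Proposition~\ref{prop_weight}(ii) satisfies $u_2\in\mathcal G$ with $\|u_2\|_{\mathcal G}\le C\|g\|_{L^1(0,T,H)}$. Its terminal value $u_2(T,\cdot)$ lies in $H$; apply the control operator $\Gamma$ from Theorem~\ref{thm11} to the target $\tilde u_1-u_2(T,\cdot)$, i.e. set $h:=\Gamma(\tilde u_1-u_2(T,\cdot))$, and let $u_1:=\Theta_1(h)$ be the solution of \eqref{non_1}. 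By construction $u_1(T,\cdot)=\tilde u_1-u_2(T,\cdot)$, so the candidate new state is $\mathcal N(u):=u_1+u_2$, which satisfies $\mathcal N(u)(T,\cdot)=\tilde u_1$ exactly; moreover a fixed point $u=\mathcal N(u)$ is precisely a solution of \eqref{ec1} with the required terminal condition. Here one must check that $\Gamma$ is indeed continuous into $L^2(0,T,L^2(0,L))$ (granted by Theorem~\ref{thm11}) and that $\Theta_1$ maps $L^2(0,T,L^2(0,L))$ continuously into $\mathcal G$ (granted by Proposition~\ref{prop45}, with $h$ there playing the role of our $h$).

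The third step is the contraction estimate. On the closed ball $B_R=\{u\in\mathcal G:\|u\|_{\mathcal G}\le R\}$ one estimates
\[
\|\mathcal N(u)\|_{\mathcal G}\le \|u_1\|_{\mathcal G}+\|u_2\|_{\mathcal G}\le C\|\Gamma\|\big(\|\tilde u_1\|_H+\|u_2(T,\cdot)\|_H\big)+C\|g\|_{L^1(0,T,H)}\le C_1\delta+C_2 R^2,
\]
using $\|\tilde u_1\|_H\le C\delta$ and the quadratic bound on $g$. Choosing $R=2C_1\delta$ and then $\delta$ small enough that $C_2R^2\le R/2$, the ball $B_R$ is invariant. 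For contraction, given $u,\tilde u\in B_R$ one writes $g-\tilde g=-(uu_x-\tilde u\tilde u_x)=-\tfrac12\partial_x(u^2-\tilde u^2)$ and uses the bilinearity built into Proposition~\ref{prop_weight}(i), namely $\|uv_x\|_{L^1(0,T,H)}\le c\|u\|_{L^2(0,T,V)}\|v\|_{L^2(0,T,V)}$, to get $\|g-\tilde g\|_{L^1(0,T,H)}\le cR\|u-\tilde u\|_{\mathcal G}$; feeding this through the linear operators $\Theta_2$, $\Gamma$, $\Theta_1$ yields $\|\mathcal N(u)-\mathcal N(\tilde u)\|_{\mathcal G}\le C_3 R\|u-\tilde u\|_{\mathcal G}$, which is a strict contraction once $\delta$ (hence $R$) is small. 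Banach's fixed point theorem then produces the desired $u\in\mathcal G$ and $h$, and one recovers the regularity $u\in C^0([0,T],L^2(0,L))\cap L^2(0,T,H^2(0,L))$ from $\mathcal G\subset C([0,T],H)\cap L^2(0,T,V)$ together with the embeddings $V\hookrightarrow H^2(0,L)$ and $H\hookrightarrow L^2(0,L)$.

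The main obstacle is not really the fixed point mechanics but making sure the nonlinear term lands in the right space: the weighted estimate $\|uv_x\|_{L^2_{\frac{1}{L-x}dx}}\le C\|u\|_V\|v\|_V$ in Proposition~\ref{prop_weight}(i) is exactly what forces the choice of the weighted functional setting, and one should double-check the boundary behaviour near $x=L$ (where the weight $\frac1{L-x}$ is singular) is absorbed by the vanishing of $v_x$ there encoded in $V\subset H^2_0(0,L)$ and the Hardy-type inequality \eqref{P1a}. A second point requiring care is that $\Gamma$ was constructed in Theorem~\ref{thm11} with target space $L^2_{\frac{1}{L-x}dx}$, so one needs $\tilde u_1-u_2(T,\cdot)\in L^2_{\frac{1}{L-x}dx}$, not merely in $H$ — but $H=L^2_{\frac{1}{L-x}dx}$ by definition \eqref{AAA1}, so this is automatic. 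Since the argument is essentially identical to the proof of Theorem~\ref{exact_nlin}-type results in \cite{CaKawahara,GG,khanal}, the remaining details are routine.
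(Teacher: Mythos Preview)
Your overall strategy coincides with the paper's --- combine Theorem~\ref{thm11} with Propositions~\ref{prop45} and~\ref{prop_weight} via a contraction argument --- but the reduction ``without loss of generality $u_0=0$'' in your first paragraph is not valid for the nonlinear equation and creates a genuine gap. If you write $u=u_L+z$ with $u_L=e^{tA}u_0$, the function $z$ satisfies
\[
z_t+z_x+z_{xxx}-z_{xxxxx}+(u_L+z)(u_L+z)_x=f,\qquad z(0,\cdot)=0,
\]
which is \emph{not} \eqref{ec1} with zero initial data: the nonlinearity still carries $u_L$. Your map $\mathcal N$, built from $g=-uu_x$ where $u$ is the iterate (which after your reduction has zero initial data), produces at its fixed point a solution $z$ of $z_t+\cdots+zz_x=f$; then $u_L+z$ does not solve \eqref{ec1}, since the cross terms $u_Lz_x+zu_{L,x}+u_Lu_{L,x}$ are missing.

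The paper avoids this by never reducing: it defines directly
\[
\mathcal F(u)=u_L+\Theta_1\circ\Gamma\big(u_T-u_L(T,\cdot)+\Theta_2(uu_x)(T,\cdot)\big)-\Theta_2(uu_x)
\]
on the full variable $u$, so that the nonlinear term $uu_x$ is computed from the candidate that already includes $u_L$. The invariance and contraction estimates are then exactly the ones you wrote, with an additional additive contribution $\|u_L\|_{\mathcal G}\le C\|u_0\|_H$ in the invariance bound. Your argument is easily repaired either by adopting this map, or by keeping your decomposition but replacing $g=-uu_x$ with $g=-(u_L+u)(u_L+u)_x$ in the definition of $\mathcal N$; the quadratic estimates from Proposition~\ref{prop_weight}(i) then still give the required $O(R^2)$ and $O(R)\|u-\tilde u\|$ bounds since $\|u_L\|_{L^2(0,T,V)}\le C\delta$ by Proposition~\ref{prop40}.
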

\begin{proof}
To show the result, we will apply the contraction mapping principle. Let $\mathcal{F}$ denote
the nonlinear map%
\[
\mathcal{F}:L^{2}(  0,T; V )  \to \mathcal{G},%
\]
defined by%
\[
\mathcal{F}(  u   )  =u_{L} +\Theta_1 \circ\Gamma(  u_{T}-u_{L}( T,\cdot)  +\Theta_ 2(  uu_{x})  (  T,\cdot) )  - \Theta_2( uu_{x})  \text{,}%
\]
where $u_{L}$ is the  solution of (\ref{ec_lin}) with initial data $u_{0}  \in L^2_{\frac{1}{L-x}dx}$, 
$\Theta_{1}$ and $\Theta_{2}$ are defined as above and $\Gamma$ is defined in Theorem \ref{thm11}.

Observe that if $u$ is a fixed point of $\mathcal{F}$, then $u$ is a solution
of (\ref{ec1}) with the control $$h=  \Gamma(  u_{T}-u_{L}( T,\cdot)  +\Theta_ 2(  uu_{x})  (  T,\cdot)),$$ 
and satisfies
\[
u(  T,\cdot)  =u_{T},%
\]
as desired.
In order to prove the existence of a fixed point of $\mathcal{F}$, we apply the Banach
fixed-point Theorem to the restriction of $\mathcal{F}$ to some closed ball
$\overline{B(0,R)}$ in $L^{2}(  0,T;V) $.

\vspace{0.2cm}

\noindent (i) $\mathcal{F}$ \textit{is contractive. } 

\vspace{0.2cm} 

Pick any $u,\tilde u\in \overline{B(0,R)}$. 
Using  \eqref{BBB}, \eqref{non_3} and \eqref{CCC}, we have
\begin{equation}
\left\Vert \mathcal{F}(  u)  -\mathcal{F}(  \tilde{u})
\right\Vert _{L^{2}(  0,T; V)  }%
\leq2CR\left\Vert u-\tilde{u}\right\Vert _{L^{2}(  0,T; V )  }\text{,} \label{fixed1}%
\end{equation}
for some constant $C>0$, independent of $u$, $\tilde{u}$ and $R$. Hence, $\mathcal{F}$ is contractive if $R$ satisfies
\begin{equation}
R<\frac{1}{4C}\text{,} \label{fixed2}%
\end{equation}
where $C$ is the constant in \eqref{fixed1}.

\vspace{0.2cm}

\noindent\noindent  (ii) $\mathcal{F}$ \textit{maps }$\overline{B(0,R)}$
\textit{into itself.}  

\vspace{0.2cm}

Using Proposition \ref{prop40} and the continuity of the operators $\Gamma$, $\Theta_1$ and $\Theta_2$, we infer the existence of a constant $C'>0$ such that 
for any $u\in\overline{B(0,R)}$, we have
\[
\left\Vert \mathcal{F}(  u)  \right\Vert _{L^{2}( 0,T;V)  }\leq C'(  \left\Vert u_{0}%
\right\Vert _{L_{  \frac{1}{L-x}  dx}^{2}}+\left\Vert
u_{T}\right\Vert _{L_{  \frac{1}{L-x}  dx}^{2}}+R^{2})
\text{.}%
\]
Thus, taking $R$ satisfying \eqref{fixed2}, $$R<1/(2C')$$ and assuming that $\left\Vert u_{0}\right\Vert
_{L_{ \frac{1}{L-x}  dx}^{2}}$ and $\left\Vert u_{T}\right\Vert
_{L_{  \frac{1}{L-x}  dx}^{2}}$ are small enough, we obtain that the operator
$\mathcal{F}$ maps $\overline{B(0,R)}$ into itself. Therefore the map
$\mathcal{F}$ has a fixed point in $\overline{B(0,R)}$ by the Banach fixed-point Theorem. The proof 
of Theorem \ref{exact_nlin} is complete.
\end{proof}
\begin{remark}
As in the linear case, the forcing term $f=(\rho (x)h)_{xx}$ indeed is a function in $$L^2_{ (T-t ) dt}(0,T,L^2(0,L))$$ supported in $(0,T)\times (L-\nu ,L)$.
\end{remark}

\section{Regional controllability for Kawahara equation\label{Sec3}}
In this section we prove a regional controllability of the following system
\begin{equation}
\left\{
\begin{array}
[c]{lll}%
u_{t}+u_{x}+uu_{x}+u_{xxx}-u_{xxxxx}= f  &  & \text{in }(  0,T)  \times(  0,L)  \text{,}\\
u(  t,0)  =u(  t,L)  =u_{x}(  t,0)=u_{x}(  t,L)  =u_{xx}(  t,L)=0 &  & \text{in }(  0,T)  \text{,}\\
u(  0,x)  =u_{0}(  x)  &  & \text{in }( 0,L)  \text{.}%
\end{array}
\right.  \label{S1a}%
\end{equation}
In detail, we prove that internal control of the Kawahara equation gives a control of hyperbolic type in the left direction and a control of parabolic type in the right direction. Before presenting the proof of the result we remark that the existence of a solution for the system \eqref{S1a} in the Sobolev space was shown in \cite{GGa} (see also \cite{MoChen}).

Now, let us state and prove the main result of this section.
\begin{theorem}
\label{thmC1}
Let $T>0$ and $\omega = (l_1,l_2)$ with $0<l_1<l_2<L$.  Pick any number $l_1'\in (l_1,l_2)$. Then there exists a number $\delta >0$ such that  for any
$u_0,u_1\in L^2(0,L)$ satisfying 
$$\left\Vert u_{0}\right\Vert _{L^2(0,L)} \leq \delta \quad\text{ and }	\quad    \left\Vert u_{1}\right\Vert _{L^2(0,L)}\leq\delta,$$ 
one can find a control $f\in L^2(0,T,H^{-2}(0,L))$ with $\text{supp}(f)\subset (0,T)\times \omega$ such that the solution 
$u\in C^0([0,T],L^2(0,L)) \cap L^2(0,T,H^2 (0,L))$ of \eqref{S1a} satisfies 
\begin{equation}\label{corS1}
u(T,x) =\left\{ 
\begin{array}{ll}
u_1(x) & \text{ if }  \ x\in (0,l_1');\\
0 &\text{ if } \ x\in (l_2,L).
\end{array}
\right. 
\end{equation}
\end{theorem}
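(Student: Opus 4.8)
The plan is to prove Theorem~\ref{thmC1} by a decomposition-and-gluing argument: I split $(0,L)$ into a ``left'' piece, on which the exact part of the conclusion is realized by a \emph{boundary} control, and a ``right'' piece, on which the null part is realized by Chen's \emph{interior} control, and then I patch the two solutions with a cut-off function whose derivative is supported in $\omega$, so that the resulting forcing term is supported in $(0,T)\times\omega$. Since both ingredients are already available for the nonlinear equation with small data, the gluing is direct and no further fixed point is needed. Concretely, using $l_1<l_1'<l_2$, I fix points
$$0<\ell'<l_1<l_1'<l_2'<\ell<l_2<L$$
and choose $\varphi\in C^\infty([0,L])$ with $\varphi\equiv1$ on $[0,l_1']$ and $\varphi\equiv0$ on $[l_2',L]$, so that $\text{supp}\,\varphi'\subset[l_1',l_2']\subset\omega$, $\text{supp}(\varphi(1-\varphi))\subset[l_1',l_2']\subset\omega$, $[l_1',l_2']\subset(0,\ell)\cap(\ell',L)$, and $\overline{\omega}\subset(\ell',L)$.

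On the interval $(0,\ell)$ I consider the Kawahara equation with the homogeneous conditions $u(t,0)=u_x(t,0)=0$ at the left endpoint and boundary controls acting at $x=\ell$; I extend $u_1|_{(0,l_1')}$ to some $\tilde u_1\in L^2(0,\ell)$ with $\|\tilde u_1\|_{L^2(0,\ell)}\le C\|u_1\|_{L^2(0,L)}$. For $\delta$ small, the boundary controllability result of \cite{GG}, applied on $(0,\ell)$, produces $u^L\in C^0([0,T],L^2(0,\ell))\cap L^2(0,T,H^2(0,\ell))$ with $u^L(0,\cdot)=u_0|_{(0,\ell)}$ and $u^L(T,\cdot)=\tilde u_1$, hence $u^L(T,\cdot)=u_1$ on $(0,l_1')$. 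On the interval $(\ell',L)$ I consider the Kawahara equation with $u(t,\ell')=u_x(t,\ell')=0$ and $u(t,L)=u_x(t,L)=u_{xx}(t,L)=0$ and an interior control $f^R$ with $\text{supp}\,f^R\subset(0,T)\times\omega$; note $\overline{\omega}\subset(\ell',L)$, so \cite[Theorem~1.1]{MoChen} applies and, for $\delta$ small, gives $u^R\in C^0([0,T],L^2(\ell',L))\cap L^2(0,T,H^2(\ell',L))$ with $u^R(0,\cdot)=u_0|_{(\ell',L)}$ and $u^R(T,\cdot)\equiv0$ on $(\ell',L)$, in particular on $(l_2,L)$.

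Now set $u:=\varphi u^L+(1-\varphi)u^R$ on $(0,T)\times(0,L)$, extending $\varphi u^L$ by $0$ onto $[\ell,L]$ and $(1-\varphi)u^R$ by $0$ onto $[0,\ell']$ (both licit since $\varphi\equiv0$ near $\ell$ and $1-\varphi\equiv0$ near $\ell'$). Then $u\in C^0([0,T],L^2(0,L))\cap L^2(0,T,H^2(0,L))$; $u(0,\cdot)=\varphi u_0+(1-\varphi)u_0=u_0$; since $\varphi\equiv1$ near $0$ and $\varphi\equiv0$ near $L$, the boundary conditions of \eqref{S1a} at $x=0$ and $x=L$ are inherited from $u^L$ and $u^R$; and $u(T,\cdot)=u^L(T,\cdot)=u_1$ on $(0,l_1')$ while $u(T,\cdot)=u^R(T,\cdot)=0$ on $(l_2,L)$, which is exactly \eqref{corS1}. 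Inserting $u$ into the equation and using that $u^L$ solves its Kawahara equation (with control only in the boundary data) and $u^R$ solves its Kawahara equation with right-hand side $f^R$, one gets
$$u_t+u_x+uu_x+u_{xxx}-u_{xxxxx}=f,\qquad f:=(1-\varphi)f^R+E,$$
where $E$ collects the commutators $[\partial_x^k,\varphi]\,u^L$ and $[\partial_x^k,1-\varphi]\,u^R$ ($k=1,3,5$) together with the nonlinear correction $uu_x-\varphi\,u^Lu^L_x-(1-\varphi)\,u^Ru^R_x$. Each term of $E$ carries a factor $\varphi^{(j)}$ ($j\ge1$) or $\varphi(1-\varphi)$, hence is supported in $(0,T)\times[l_1',l_2']\subset(0,T)\times\omega$; with $\text{supp}\,f^R\subset(0,T)\times\omega$ this gives $\text{supp}\,f\subset(0,T)\times\omega$. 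Moreover $f\in L^2(0,T,H^{-2}(0,L))$: the top-order commutators, e.g. $\varphi'u^L_{xxxx}$, equal $\partial_x^2(\varphi'u^L_{xx})$ minus lower-order terms and hence lie in $L^2(0,T,H^{-2})$ because $u^L,u^R\in L^2(0,T,H^2)$, while the nonlinear terms lie in $L^2(0,T,H^{-1})$ by the product estimate $\|u^Lu^R_x\|_{H^{-1}}\le C\|u^L\|_{L^2}\|u^R_x\|_{L^2}$ together with $u^L\in L^\infty(0,T,L^2)$ and $u^R\in L^2(0,T,H^1)$. By the well-posedness of \eqref{S1a} in $C^0([0,T],L^2(0,L))\cap L^2(0,T,H^2(0,L))$ (see \cite{GGa}), $u$ is the solution of \eqref{S1a} associated with $u_0$ and this $f$, completing the proof.

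The genuinely delicate point is the design of the left subproblem: the exact control of the left region must be performed by a \emph{boundary} control acting at an interior point $\ell\in\omega$. This is what makes $L^2(0,\ell)$ data admissible — a direct application of Theorem~\ref{thmB} on $(0,\ell)$ would instead require $u_0|_{(0,\ell)}\in L^2_{(\ell-x)^{-1}dx}$, which a general $L^2$ datum need not satisfy — and it renders the a priori nonsmooth traces of $u^L$ at $x=\ell$ harmless, since they are multiplied by $\varphi\equiv0$ near $\ell$. Once this choice is made, the rest (tracking the supports of the glued forcing and its $H^{-2}$-regularity) is routine; this is also where the asymmetry of the statement originates, the exact control propagating to the left and the null control being all that survives on the right.
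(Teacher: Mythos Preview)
Your proof is correct, but it follows a genuinely different strategy from the paper's. The paper argues \emph{sequentially in time}: on $[0,T/2]$ it applies Chen's null controllability \cite[Theorem~1.1]{MoChen} on the full interval $(0,L)$ with control in $\omega$, driving $u_0$ to $0$; then on $[T/2,T]$ it applies the boundary controllability of \cite{GG} on a subinterval $(0,l_2')$ with $l_2'\in(l_1',l_2)$, starting from $0$ and reaching $u_1$, and multiplies by a single cut-off $\mu$ (equal to $1$ on $[0,l_1']$, vanishing past $(l_1'+l_2')/2$) to extend to $(0,L)$. Your argument is instead \emph{parallel in space}: you run both subproblems simultaneously on $[0,T]$ over overlapping intervals $(0,\ell)$ and $(\ell',L)$ and glue the two nonlinear solutions with $\varphi$.

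The paper's route is slightly leaner: since the second phase starts from $0$, only one solution $y$ is cut off, so the forcing it produces has no cross-terms like $u^L u^R_x$, only commutators of $\mu$ with the spatial derivatives of $y$ and the quadratic term $\mu(\mu-1)yy_x$. Your route costs those extra cross-terms in $E$, but you handle them correctly (the $L^1\hookrightarrow H^{-1}$ argument is fine), and you gain a construction that avoids a time-splitting and makes the two ingredients interchangeable. Your closing remark is exactly the point that drives both proofs: the exact part must come from the \emph{boundary} control of \cite{GG} at an interior node of $\omega$, not from Theorem~\ref{thmB}, so that only an $L^2$ assumption on the data is needed.
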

\begin{proof}
By \cite[Theorem 1.1]{MoChen}, if $\delta$ is small enough one can find a control input $f\in L^2(0,T/2,L^2(0,L))$ with 
$\text{supp} (f) \subset (0,T)\times \omega$ such that the solution of 
\eqref{S1a} satisfies $u(T/2,.)\equiv 0$ in $(0,L)$, where $\omega$ is a subset of $(0,L)$.

Let us consider any number $l_2'\in (l_1',l_2)\subset(0,L)$. By \cite[Theorem 1]{GG}, if $\delta$ is small enough one can pick a function $g,h\in L^2(T/2,T)$ such that the solution $$y\in C^0([T/2,T],L^2(0,l_2'))\cap L^2(T/2,T,H^2(0,l_2'))$$ of the system
\[
\left\{
\begin{array}{ll}
y_t-y_{xxxxx}+y_{xxx}+y_x +yy_x =0 \quad &\text{ in } (T/2,T)\times (0,l_2'),\\
y(t,0)=y_x(t,0)=y_{xx}(t,l_2')=0,\ \ y(t,l_2')=g(t),\ \ y_x(t,l_2')=h(t) &\text{ in } (T/2,T),\\ 
y(T/2,x)=0 &\text{ in } (0,l_2')
\end{array}
\right.
\]
satisfies $y(T,x)=u_1(x)$ for $0<x<l_2'$. Define a function $\mu\in C^\infty ( [0,L] ) $ as
\[
\mu (x) :=\left\{ 
\begin{array}{ll}
1\quad &\text{ if } x<l_1',\\
0&\text{ if } x>\frac{l_1'+l_2'}{2},
\end{array}
\right. 
\]
and set for $T/2< t \le T$
\[
u(t,x)=
\left\{ 
\begin{array}{ll}
\mu(x) y(t,x)\quad &\text{ if } x<l_2',\\
0&\text{ if } x>l_2'.
\end{array}
\right.  
\]
Note that, for $T/2<t<T$, $u_t-u_{xxxxx}+u_{xxx}+u_x+uu_x=f$ with 
\begin{eqnarray*}
f&=& -(\mu'''''y+5\mu''''y_x+10\mu'''y_{xx}+10\mu''y_{xxx}+5\mu'y_{xxxx}) \\
&&+(\mu'''y+3\mu''y_x+3\mu'y_{xx}+\mu'y)+	\mu\mu'y^2+\mu(\mu-1)yy_x.
\end{eqnarray*}
Since $||y||^4_{ L^4(0,T,L^4(0,l_2'))}\le C||y||^2_{L^\infty(0,T,L^2(0,L))} ||y||^2_{L^2(0,T,H^2(0,L))}$, it is clear that $$f\in L^2(0,T,H^{-2}(0,L))$$ with 
$\text{supp} (f)\subset (0,T)\times (l_1,l_2)$. Furthermore, 
$u\in C([0,T],L^2(0,L))\cap L^2(0,T,H^2(0,L))$ solves \eqref{S1a} and satisfies  \eqref{corS1}, proving the result.
\end{proof}

\section{Further Comments and Open issues\label{Sec4}}

In this work we treated the well-posedness and controllability of the Kawahara equation, a fifth order KdV type equation, in a bounded domain. Here, we were able to give an almost complete picture of the internal controllability for the Kawahara system started by \cite{MoChen}. Thus, the following remarks are now in order.

\begin{itemize} 
\item[i.] A result of the controllability to the trajectories remains valid for the system \eqref{S1}. Precisely,  the result can be read as follows
\begin{theorem}\label{trajectories} 
Let $\omega=\left(l_{1}, l_{2}\right)$ with $0<l_{1}<l_{2}<L,$ and let $T>0 .$ For $\bar{u}_{0} \in L^{2}(0, L),$ let $\bar{u} \in$ $C^{0}\left([0, T] ; L^{2}(0, L)\right) \cap L^{2}\left(0, T ; H^{2}(0, L)\right)$ denote the solution of 
\begin{equation}\label{S1A}
 \left\{\begin{array}{ll} \bar{u}_{t}+\bar{u}_{x}+\bar{u} \bar{u}_{x}+\bar{u}_{x x x}-\bar{u}_{x x x x x}=0, & \text { in }(0, T) \times(0, L), \\ \bar{u}(t, 0)=\bar{u}(t, L)=\bar{u}_{x}(t, 0)=\bar{u}_{x}(t, L)=\bar{u}_{xx}(t, L)=0, & \text { in }(0, T), \\ \bar{u}(0, x)=\bar{u}_{0}(x), & \text { in }(0, L).
 \end{array}\right. 
\end{equation} 
Then, there exists $\delta>0$ such that for any $u_{0} \in L^{2}(0, L)$ satisfying $\left\|u_{0}-\bar{u}_{0}\right\|_{L^{2}(0, L)} \leq \delta,$ there exists $f \in$ $L^{2}((0, T) \times \omega)$ such that the solution $$u \in C^{0}\left([0, T] ; L^{2}(0, L)\right) \cap L^{2}\left(0, T, H^{2}(0, L)\right)$$ of \eqref{S1} satisfies $u(T, \cdot)=\bar{u}(T, \cdot)$ in $(0, L)$. \end{theorem}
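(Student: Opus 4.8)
The plan is to turn the controllability-to-trajectories statement into a \emph{local null controllability} result for the Kawahara equation linearized about $\bar u$, to prove the latter by the Hilbert Uniqueness Method together with a Carleman estimate in the spirit of \cite{MoChen}, and then to recover the nonlinear statement by a fixed-point argument exactly as in the proof of Theorem \ref{exact_nlin}. \textbf{Step 1 (reduction).} Given $u_0$ with $\|u_0-\bar u_0\|_{L^2(0,L)}\le\delta$, set $w:=u-\bar u$. Subtracting \eqref{S1A} from \eqref{S1} and using the identity $uu_x-\bar u\bar u_x=(\bar u\,w)_x+ww_x$, we see that $u$ solves \eqref{S1} with a control supported in $(0,T)\times\omega$ and $u(T,\cdot)=\bar u(T,\cdot)$ if and only if $w$ solves
\[
\left\{
\begin{array}{l}
w_t+w_x+w_{xxx}-w_{xxxxx}+(\bar u\,w)_x+ww_x=f\quad\text{in }(0,T)\times(0,L),\\
w(t,0)=w(t,L)=w_x(t,0)=w_x(t,L)=w_{xx}(t,L)=0\quad\text{in }(0,T),\\
w(0,x)=u_0(x)-\bar u_0(x)\quad\text{in }(0,L),
\end{array}
\right.
\]
with $\text{supp}\,f\subset(0,T)\times\omega$ and $w(T,\cdot)\equiv 0$. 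Thus Theorem \ref{trajectories} is equivalent to the local null controllability of this system, the smallness parameter being $\|w(0)\|_{L^2}=\|u_0-\bar u_0\|_{L^2}$.

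\textbf{Step 2 (linear null controllability with the potential $\bar u$).} Drop the term $ww_x$ and consider $w_t+w_x+w_{xxx}-w_{xxxxx}+(\bar u\,w)_x=f$ with $\text{supp}\,f\subset(0,T)\times\omega$. By HUM, null controllability with $f\in L^2((0,T)\times\omega)$ and the bound $\|f\|_{L^2((0,T)\times\omega)}\le C\|w(0)\|_{L^2}$ is equivalent to the observability inequality
\[
\|\varphi(0,\cdot)\|_{L^2(0,L)}^2\le C\int_0^T\!\!\!\int_\omega|\varphi|^2\,dx\,dt
\]
for solutions of the adjoint system, which is \eqref{H25} perturbed by the lower-order term $-\bar u\,\varphi_x$ (the adjoint of $w\mapsto(\bar u\,w)_x$):
\[
\left\{
\begin{array}{l}
-\varphi_t+\varphi_{xxxxx}-\varphi_{xxx}-\varphi_x-\bar u\,\varphi_x=0\quad\text{in }(0,T)\times(0,L),\\
\varphi(t,0)=\varphi(t,L)=\varphi_x(t,0)=\varphi_x(t,L)=\varphi_{xx}(t,0)=0.
\end{array}
\right.
\]
To prove it I would start from the Carleman estimate established in \cite{MoChen} for the operator $\partial_t-\partial_x^5+\partial_x^3+\partial_x$ with an internal observation --- whose principal part is exactly the one above --- and absorb the extra term $\bar u\,\varphi_x$ into the left-hand side by taking the Carleman parameters $s,\lambda$ large: since $\bar u\in L^2(0,T;H^2(0,L))\hookrightarrow L^2(0,T;W^{1,\infty}(0,L))$ one has $\|\bar u\,\varphi_x\|_{L^2}\le\|\bar u\|_{L^\infty_x}\|\varphi_x\|_{L^2}$, which is of strictly lower order in $s,\lambda$ than the gradient term on the left of the Carleman inequality. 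From it one first obtains observability up to an additive lower-order term $\|\varphi\|_{L^2((0,T)\times(0,L))}^2$, which is then removed by a compactness--uniqueness argument together with the unique continuation property for the adjoint equation (as for \eqref{H48}, via Holmgren's theorem). HUM then provides a continuous linear operator $u_0-\bar u_0\mapsto f\in L^2((0,T)\times\omega)$ and the controlled trajectory $w\in C([0,T];L^2(0,L))\cap L^2(0,T;H^2(0,L))$ with $w(T,\cdot)=0$.

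\textbf{Step 3 (nonlinear fixed point).} Decompose $w=w_1+w_2$, where $w_1$ carries the control furnished by Step 2 and $w_2$ solves the Kawahara equation with homogeneous boundary data, zero initial data and source $-ww_x$; using the bilinear estimate $\|zz_x\|_{L^1(0,T;L^2(0,L))}\le C\|z\|_{L^2(0,T;H^2(0,L))}^2$ (the unweighted analogue of \eqref{non_3}), the energy bound for $w_2$ (the analogue of \eqref{CCC}), and the continuity of the linear control map, one defines a map $z\mapsto w$ on a small ball of $C([0,T];L^2(0,L))\cap L^2(0,T;H^2(0,L))$ and checks it is a contraction provided $\delta$ is small enough, exactly as in the proof of Theorem \ref{exact_nlin}. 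Its fixed point $w$ gives $u=\bar u+w$, which solves \eqref{S1} with a control $f\in L^2((0,T)\times\omega)$ and satisfies $u(T,\cdot)=\bar u(T,\cdot)$.

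\textbf{Main obstacle.} The hard part is Step 2, and specifically the fact that $\bar u$ is only known to lie in $C^0([0,T];L^2(0,L))\cap L^2(0,T;H^2(0,L))$, so the first-order perturbation of the adjoint operator has a coefficient that is neither bounded on $(0,T)\times(0,L)$ nor better than $L^2$ in $t$; one must check carefully that it is genuinely absorbable by the Carleman estimate --- keeping the bad term under the time integral and using that the Carleman weights are bounded away from $t=0,T$, or upgrading the integrability of $\bar u$ in $t$ on subintervals via the Kato smoothing effect of Proposition \ref{prop40}, and closing by a Gronwall argument --- and that the unique continuation property still holds for this perturbed equation.
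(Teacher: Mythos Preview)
Your proposal is correct and follows essentially the same route as the paper: subtract the trajectory to reduce to null controllability of $q=u-\bar u$, which solves a Kawahara-type system with the extra lower-order term $(\bar u\,q)_x$, and then invoke the Carleman machinery of \cite{MoChen}. The paper's argument is shorter only in that it cites \cite[Theorem~1.1]{MoChen} directly for the difference equation \eqref{S1AA} rather than unpacking the observability/HUM/fixed-point steps you sketch; in particular the ``main obstacle'' you flag---absorbing a first-order perturbation with coefficient $\bar u\in C^0([0,T];L^2)\cap L^2(0,T;H^2)$---is exactly the regularity that arises in the fixed-point step of \cite{MoChen}, so it is already covered there.
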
 

\item[ii.] The proof of Theorem \ref{trajectories} is a direct consequence of the Carleman estimate shown by Chen \cite{MoChen}, being precise: \cite[Theorem 1.1]{MoChen} is equivalent to the previous result. In fact, consider $u$ and $\bar{u}$ fulfilling the system \eqref{S1} and  \eqref{S1A}, respectively. Then $q=u-\bar{u}$ satisfies
\begin{equation}\label{S1AA}
\left\{\begin{array}{ll}
q_{t}+q_{x}+\left(\frac{q^{2}}{2}+\bar{u} q\right)_{x}+q_{x x x}-q_{x x x x x}=1_{\omega} f(t, x), & \text { in }(0, T) \times(0, L), \\
q(t, 0)=q(t, L)=q_{x}(t,0)=q_{x}(t, L)=q_{xx}(t, L)=0, & \text { in }(0, T), \\
q(0, x)=q_{0}(x):=u_{0}(x)-\bar{u}_{0}(x), & \text { in }(0, L).
\end{array}\right.
\end{equation}
So, the objective is to find $f$ such that the solution $q$ of \eqref{S1AA} satisfies
$$q(T, \cdot)=0.$$
However, this is exactly what has been proven in \cite[Theorem 1.1]{MoChen}, this means that the null controllability for the Kawahara equation is equivalent to the controllability to the trajectories for this equation.
\end{itemize} 

Observe that with the Theorems \ref{thmB}, \ref{thmC}, \ref{trajectories} and \cite[Theorem 1.1]{MoChen} we have almost completed the answers regarding internal controllability for equation \eqref{S1}. However,  it is important to note that due to the techniques used here the issue whether $u$ may also be controlled in the interval $(l_1',l_2)\subset(0,L)$ is open, missing a final step to give a complete answer on Kawahara's internal controllability. This open problem can be presented as follows:

\vspace{0.2cm}
\noindent\textbf{Problem $\mathcal{A}$}: Is it possible to control the Kawahara equation in the interval $(l_1',l_2)$?
\vspace{0.1cm}

Anyway, other problems about internal controllability can be attacked using new techniques and arguments. In this way, below, our plan is to present some problems that seem interesting from a mathematical point of view. More precisely, we present open issues about internal controllability of the Kawahara equation with an integral condition in unbounded and bounded domains.

\subsection{Controllability of Kawahara equation: Unbounded domain} In the context of control on unbounded domains, Faminskii \cite{Fa}, in a recent work, considered the initial-boundary value problems, posed on infinite domains for the Korteweg--de Vries equation. Precisely, he elected a function $f_0$ on the right-hand side of the equation as an unknown function, regarded as a control. Thus,  the author proved that this function must be chosen such that the corresponding solution should satisfy certain additional integral conditions.

Thus, we believe that this techniques can be applied  for the Kawahara equation posed on the right/left half-lines:
\begin{equation}\label{f2}
\begin{cases}
u_{t}+u_{x}+u_{xxx}-u_{xxxxx}+ uu_{x}=f_0(t)v(x,t), & (t,x)\in (0,T)  \times(0,\infty),\\
u(0,x)=u_0(x),                                   & x\in(0,\infty),\\
u(t,0)=h(t),\ u_x(t,0)=g(t), & t\in(0,T),
\end{cases}
\end{equation}
and 
\begin{equation}\label{f3}
\begin{cases}
u_{t}+u_{x}+u_{xxx}-u_{xxxxx}+ uu_{x}=f_0(t)v(x,t), & (t,x)\in (0,T)  \times(-\infty,0),\\
u(0,x)=u_0(x),                                   & x\in(-\infty,0),\\
u(t,0)=h(t),\ u_x(t,0)=g(t),\ u_{xx}(t,0)=k(t) & t\in(0,T).
\end{cases}
\end{equation}
Here $v$ is a given function and $f_0$ is an unknown control function. Therefore, the following open issue naturally appears.

\vspace{0.2cm}
\noindent\textbf{Problem $\mathcal{B}$}: Can we find a pair $\{f_0, u\}$, satisfying $$\int_{\mathbb{R^+}}u(t,x)w(x)dx=\varphi(t), \quad \text{or} \quad \int_{\mathbb{R^-}}u(t,x)w(x)dx=\varphi(t), $$ such that the functions $w$ and $\varphi$ are given and $u$ is the solution of \eqref{f2} or \eqref{f3}?

\subsection{Controllability of Kawahara equation: Bounded domainn}
With respect to the control issues in a bounded domain a new approach, different from the one used in this article, was recently introduced by Faminskii \cite{Fa1}.  Faminskii established results for the Korteweg--de Vries equation in a bounded domain under an integral overdetermination condition. More precisely, with smallness conditions on either the input data or the time interval, the author showed the controllability when the control has a special form.

In this spirit, we believe that the following problem seems very interesting. Consider the Kawahara equation as follows: 
\begin{equation}\label{f4}
\begin{cases}
u_{t}+u_{x}+u_{xxx}-u_{xxxxx}+ uu_{x}=f_0(t)v(x,t), & (t,x)\in (0,T)  \times(0,L),\\
u(0,x)=u_0(x),                                   & x\in(0,L),\\
u(  t,0)  =h_1(t),\ u(  t,L)  =h_2(t),&t\in(0,T),\\
	 u_{x}(  t,0)=h_3(t),\ u_{x}(  t,L)=h_4(t),&t\in(0,T),\\
	 u_{xx}(  t,L)  =h_5(t)& t\in(0,T).
\end{cases}
\end{equation}

\vspace{0.2cm}
\noindent\textbf{Problem $\mathcal{C}$}: For given functions $u_0$ and $h_i$, $i=1,2,3,4,5$, can we find a function $f_0$ such that the solution $u$
of system \eqref{f4} satisfies the overdetermination condition $$\int^L_0u(t,x)w(x)dx=\varphi(x), \quad t\in(0,T)$$ where $w$ and $\varphi$ are known functions?

\subsection*{Acknowledgments:} The authors thank the anonymous referee for their helpful comments and suggestions. 

R. de A. Capistrano--Filho was supported by CNPq 306475/2017-0, 408181/2018-4, CAPES-PRINT 88881.311964/2018-01, MATHAMSUD  88881.520205/2020-01 and Propesqi (UFPE) \textit{via} ``produ\c{c}\~{a}o qualificada". M. Gomes was partially supported by CNPq. This work is part of the PhD thesis of M. Gomes at Universidade Federal de Pernambuco.



\end{document}